\numberwithin{equation}{section}
\newtheorem{thm}{Theorem}[section]
\newtheorem{cor}[thm]{Corollary}
\newtheorem{lem}[thm]{Lemma}
\newtheorem{prop}[thm]{Proposition}
\theoremstyle{definition}
\newtheorem{dfn}[thm]{Definition}
\newtheorem{rmk}[thm]{Remark}
\newcommand{\N}{\mathds{N}}
\newcommand{\Z}{\mathds{Z}}
\newcommand{\R}{\mathds{R}}
\begin{document}

\title[Morse homology and elliptic pde's]{Morse homology for a class of elliptic partial differential equations}

\author[L. Asselle]{Luca Asselle}
\address{Ruhr-Universit\"at Bochum, Universit\"atsstra\ss e 150, 44801, Bochum, Germany}
\email{luca.asselle@rub.de}

\author[S. Cingolani]{Silvia Cingolani}
\address{Universit\'a degli Studi di Bari Aldo Moro, Via Orabona 4, 70125 Bari, Italy}
\email{silvia.cingolani@uniba.it}

\author[M. Starostka]{Maciej Starostka}
\address{Gda\'nks University of Technology, Gabriela Narutowicza 11/12, 80233 Gda\'nsk, Poland}
\email{maciej.starostka@pg.edu.pl}

\date{\today}
\subjclass[2000]{58E05}
\keywords{Morse theory, $p$-Laplacian, $p$-area functional}

\begin{abstract}
In this paper we show that a notion of non-degeneracy which allows to develop Morse theory is generically satisfied for a large class of $C^2$-functionals defined on Banach spaces.
The main element of novelty with respect to the previous work \cite{Asselle:2024} of the first and third author is that 
we do not assume the splitting induced by the second differential at a critical point to persist in a neighborhood, 
provided one can give precise estimates on how much persistence fails. This allows us to enlarge significantly the class of elliptic 
pde's for which non-degeneracy holds and Morse homology can be defined. A concrete example is given by
equations involving the $p$-Laplacian, $p\leq n$. As a byproduct, we provide a criterion of independent interest to check whether critical 
points are non-degenerate in the sense above, 
and give an abstract construction of Morse homology in a Banach setting for functionals satisfying the Cerami condition. 
\end{abstract}

\maketitle

\textbf{Data availability statement:} Data sharing not applicable to this article as no datasets were generated or analysed during the current study.

\vspace{1mm}

\textbf{Conflict of interests statement:} On behalf of all authors, the corresponding author states that there is no conflict of interest. 





\section{Introduction}
\label{sec:introduction}

Let $\Omega\subset \R^n$, $n\geq 2$, be an open bounded set with boundary of class $C^1$. For $p>2$ fixed, we set $X:=W^{1,p}_0(\Omega)$ and consider a functional $f:X\to \R$ of the form 
\begin{equation}
f(u) := \int_\Omega \Psi(\nabla u(x))\, \mathrm d x - \int_\Omega G(x,u(x))\, \mathrm d x, \quad \forall u \in X,
\label{eq:functionalf}
\end{equation}
where $G(x,s) := \int_0^s g(x,t) \, \mathrm d t$ for some function $g:\Omega \times \R\to \R$ such that $g(\cdot,s)$ is measurable for every $s\in \R$ and $g(x,\cdot)$ is of class $C^1$ for a.e. $x\in \Omega$, and $\Psi:\R^n\to \R$ is of class $C^2$ with $\Psi(0)=0$ and $\nabla \Psi(0)=0$. The typical example we are interested in is $G(x,s)=G(s) = \int_0^s g(t) \, \mathrm d t$ and 
$$\Psi_{\kappa} : \R^n\to \R, \quad \Psi_{\kappa}(x) := \frac 1p \Big [\big (\kappa^2 + |x|^2\big)^{p/2} - \kappa^2\Big ]$$
for some $\kappa>0$. Notice that, for $\kappa=1$, critical points of the associated $f$ given by \eqref{eq:functionalf} correspond to non-trivial solutions of the following quasilinear elliptic problem involving the $p$-area functional: 

$$\left \{ \begin{array}{r} - \text{div}\, \Big [\big (1 + |\nabla u|^2\big )^{\frac{p-2}2} \nabla u \Big ] = g(u) \quad \text{in}\ \Omega, \\ u=0 \qquad \qquad \qquad \qquad  \ \ \ \text{on}\ \partial \Omega.\end{array}\right . $$

More generally, we make the following assumptions on the functions $g$ and $\Psi$. Here, as usual $p^*:= \frac{np}{n-p}$ denotes the Sobolev conjugate exponent, and we set $p^*=+\infty$ if $p\geq n$.

\begin{itemize}
\item[($\Psi$)] There exist $\kappa,\mu_1,\mu_2>0$ such that the functions 
$$\Psi-\mu_1\Psi_\kappa,\ \mu_2\Psi_\kappa - \Psi$$
are both convex.
 \item[($g$)] There exist $c>0$ and $0\leq q<p^*-2$ such that 
 $$|\partial_s g(x,s)| \leq c( 1+ |s|^{q}), \quad \text{for a.e.}\ x\in \Omega, \ \forall s\in \R.$$
\end{itemize}

Notice that Assumption ($\Psi$) implies in particular that $\Psi$ is strictly convex, whereas Assumption ($g$) implies that $f$ is of class $C^2$ in $X$.  Following \cite{Cingolani:2018} we see that critical points of $f$ correspond to weak solutions of  
\begin{equation}
\label{eq:bvp}
\left \{ \begin{array}{r} - \text{div}\, \big [\nabla \Psi (\nabla u)\big ] = g(x, u) \quad \quad  \text{in}\ \Omega, \\ u=0 \qquad \qquad  \qquad \ \ \ \text{on}\ \partial \Omega.\end{array}\right .\end{equation}

Equations as in \eqref{eq:bvp} appear ubiquitously in the mathematical description of many phenomena of physical origin. For instance, the choice of $\Psi(x):= \frac 1p |x|^p + \frac 12 |x|^2$ yields to the quasilinear problem 
$$\left \{ \begin{array}{r} - \Delta_p u - \Delta u =  g(x,u) \quad \quad  \text{in}\ \Omega, \\ u=0 \qquad \qquad  \qquad \ \ \ \text{on}\ \partial \Omega.\end{array}\right .$$ 
which models the propagation of solitary waves, see \cite{Benci,Benci:1998}. Here, $\Delta_p u:=\text{div}\, (|\nabla u|^{p-2} \nabla u)$ is the $p$-Laplace operator. 
Such equations also model several phenomena in non-Newtonian mechanics, nonlinear elasticity and glaciology, combustion theory, population biology; see \cite{Antonini:2025,Diaz:1994,Glowing:2003}.

The fact that solutions of \eqref{eq:bvp} admit a variational characterization as critical points of the functional $f$ in \eqref{eq:functionalf} opens up the possibility to use Morse theoretical methods 
to obtain existence and multiplicity results of solutions. However, extending Morse theory to Banach spaces presents some conceptual difficulties (c.f. \cite{Chang:1993ng,Uhlenbeck:1972,Tromba:1977}).
In particular, the classical definition of {\sl non-degeneracy} for critical points does not seem reasonable in a Banach setting,
and several obstructions arise due to the lack of Fredholmness of the linearized operators at the critical points. 
Therefore the classical Morse Lemma, its generalized versions due to
Gromoll-Meyer \cite{Gromoll:1969jy}, and perturbation results \'a la 
Marino-Prodi \cite{Marino:1975ii} cannot be applied directly.
In \cite{Cingolani:2005,Cingolani:2018,Cingolani:2003}, some ideas are introduced to
study the critical groups of the Euler functional \eqref{eq:functionalf}.
Even though the second differential at a critical point $u$ is not bijective and, more precisely, 
not a Fredholm operator,  the presence of the semilinear term gives rise to
a linearized operator that is non-degenerate in the sense of elliptic partial
differential equations, and thus
one can perform a suitable finite dimensional reduction. 
In this way, it is possible to give a good extension of local Morse theory
for functionals as in \eqref{eq:functionalf}. Also, for such a specific class of functionals, the injectivity of
$f''(u)$ seems to yield a meaningful non-degeneracy notion. Indeed, one can show that, if  $f''(u)$ is injective, then $u$ is isolated and
the critical groups at $u$ can be computed by means of its Morse index.
Moreover, in \cite{Cingolani:2007} the authors proved that injectivity is generically satisfied, showing that each critical point $u$ of $f$ such that $f''(u)$  is not injective can be ``resolved'' in a finite number of 
critical points (of a locally approximating functional) at which the second differential is injective. 

%

Despite the aforementioned results, the problem of non-degeneracy still remains in a Banach setting. In fact, requiring the second differential at critical points to be injective 
is in general not enough to do Morse theory, because it does not even imply that critical points are isolated. Consider for instance the $C^2$-functional 
\begin{equation}
\varphi:\ell^2(\N)\to \R, \quad \varphi(v) := \sum_{n=1}^{+\infty} \frac{\cos (n v_n)}{n^4}.
\label{eq:counterexample}
\end{equation}
We readily see that $v=0$ is a critical point of $\varphi$ and that the second differential at $v=0$  is given by
$$\mathrm d^2 \varphi(0)[w,z] = \sum_{n=1}^{+\infty} \frac{w_nz_n}{n^2}.$$
In particular $\mathrm d^2 \varphi(0)$ is injective and positive definite, but $0$ is not isolated in the set of critical points of $\varphi$. 

In the literature, several notion of non-degeneracy for functionals defined on a Banach space have been introduced (see for instance the works of Uhlenbeck \cite{Uhlenbeck:1972}, Tromba \cite{Tromba:1977} and Chang \cite{Chang:1993ng}), however it is as of now not clear whether such notions of non-degeneracy are generically satisfied (and are not even known to hold in any concrete example). 
The notion of non-degeneracy we use here is inspired by \cite{Abbondandolo:2006lk}.

\begin{dfn}
Let $Y$ be a Banach space, and let $F:Y\to \R$ be a functional of class $C^1$. A critical point $y_0$ of $F$ is called \textit{non-degenerate} if there exist a neighborhood $\mathcal U\subset Y$ of $y_0$ and a bounded linear hyperbolic operator $L:Y\to Y$ such that $F$ is a Lyapounov function on $\mathcal U$ for the linear flow induced by $L$. We call $F$ a \textit{Morse function} if all its critical points are non-degenerate. 
\label{def:nondegintro}
\end{dfn}

A non-degenerate critical point in the sense above is clearly isolated. Also, such a notion of non-degeneracy is much weaker than the usual one even in a Hilbert setting 
(in fact, it is easy to construct examples of non-degenerate critical points in the sense above for which the Hessian is neither injective nor surjective), and, as we shall recall in Section 2, is the one to prefer for Morse theory, because it implies that stable and unstable manifolds of critical points are well-defined submanifold of $X$ of the expected dimension\footnote{At this point, there is a subtlety when dealing with functionals whose critical points having infinite Morse index and co-index. Such an issue will be ignored in this paper because it never occurs for functionals as in \eqref{eq:functionalf}.}. We shall stress the fact that the approach to Morse theory based on stable and unstable manifolds has a huge advantage over the classical one based on deformation of sublevel sets, critical groups, etc. because it can (a priori) be used also for functionals whose critical points have infinite Morse index and co-index, case in which critical groups always vanish.

In the previous work \cite{Asselle:2024} by the first and third author it 
is shown that critical points of functionals as in \eqref{eq:functionalf} with $\Psi(x) = \frac 1p |x|^p + \frac 12 |x|^2$, $p>n$, and $q<p-2$ are generically non-degenerate in the sense of Definition \ref{def:nondegintro}. Here, we build on such a result to prove the following

\begin{thm}
Let $f:X\to \R$ be a functional as in \eqref{eq:functionalf} satisfying $(\Psi)$ and $(g)$. Let $\bar u \in X$ be a critical point of $f$ such that $\mathrm d^2f(\bar u):X\to X^*$ is injective. Then $\bar u$ is non-degenerate in the sense of Definition \eqref{def:nondegintro}. 
\label{thm:main1}
\end{thm}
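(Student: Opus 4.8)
The plan is to write $d^2f(\bar u)$ as a compact perturbation of a positive principal part, to pass to the Hilbert space on which that principal part is coercive, to build $L$ from the resulting spectral decomposition, and finally to verify the Lyapunov inequality not by a second-order expansion but by exploiting the monotonicity coming from $(\Psi)$. Differentiating \eqref{eq:functionalf} twice, $d^2f(u)=A_u-K_u$ with $A_u[v,w]=\int_\Omega\langle\nabla^2\Psi(\nabla u)\nabla v,\nabla w\rangle\,\mathrm{d}x$ and $K_u[v,w]=\int_\Omega\partial_s g(x,u)vw\,\mathrm{d}x$; write $A:=A_{\bar u}$, $K:=K_{\bar u}$. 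By $(\Psi)$ one has $\mu_1\nabla^2\Psi_\kappa(\nabla\bar u)\le\nabla^2\Psi(\nabla\bar u)\le\mu_2\nabla^2\Psi_\kappa(\nabla\bar u)$, and since $\kappa>0$ and $p>2$ the weight $\nabla^2\Psi(\nabla\bar u)$ is bounded below by $\mu_1\kappa^{p-2}$; hence $A[\cdot,\cdot]$ is an inner product and I let $\mathcal H$ be the completion of $X$ for $\|v\|_{\mathcal H}=A[v,v]^{1/2}$. Hölder's inequality (the weight lies in $L^{p/(p-2)}$) gives a continuous dense embedding $X\hookrightarrow\mathcal H$, while the lower bound gives $\mathcal H\hookrightarrow W^{1,2}_0(\Omega)$. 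On $\mathcal H$ the form $A$ is the Riesz isomorphism and, by $(g)$ together with Rellich--Kondrachov, $K$ is represented by a compact self-adjoint operator $\mathcal K$, so that $d^2f(\bar u)$ corresponds to $\mathrm{Id}-\mathcal K$.

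Since $df(\bar u)=0$, for $\bar u+v$ in a convex neighborhood $\mathcal U$ one has $df(\bar u+v)[Lv]=\int_0^1 d^2f(\bar u+tv)[v,Lv]\,\mathrm{d}t$, so it suffices to produce a bounded hyperbolic $L:X\to X$ and $c>0$ with $\int_0^1 d^2f(\bar u+tv)[v,Lv]\,\mathrm{d}t\le -c\|v\|_{\mathcal H}^2$ whenever $\bar u+v\in\mathcal U$. Because $\|v\|_{\mathcal H}>0$ for $v\neq0$, such an estimate already yields strict decrease of $f$ along the flow of $L$, and hence non-degeneracy in the sense of Definition \ref{def:nondegintro}; the key point is that the estimate is only required in the weaker norm $\|\cdot\|_{\mathcal H}$, not in the $X$-norm.

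To build $L$: every $\phi\in\ker(\mathrm{Id}-\mathcal K)$ solves $-\mathrm{div}[\nabla^2\Psi(\nabla\bar u)\nabla\phi]=\partial_s g(x,\bar u)\phi$ weakly and hence lies in $X$ by elliptic regularity, so injectivity of $d^2f(\bar u)$ on $X$ forces $\mathrm{Id}-\mathcal K$ to be injective, thus invertible (Fredholm of index $0$), with spectral gap $\delta>0$ and finite-dimensional negative eigenspace $V^-\subset X$. Let $P^-$ be the $A$-orthogonal projection onto $V^-$; it is bounded on $X$ because $V^-\subset X$ and $A[\cdot,e_i]$ is $X$-continuous. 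I set $L:=-\mathrm{Id}+2P^-$, which is bounded and hyperbolic on $X$ with unstable subspace $V^-$. On the finite-dimensional $V^-$ all norms are equivalent and $d^2f(\bar u)$ is negative definite, and these properties persist for $u$ near $\bar u$; the content of the estimate is therefore carried by the complementary directions, where $Lv=-v$.

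On those directions the integral $\int_0^1 d^2f(\bar u+tv)[v,-v]\,\mathrm{d}t$ has a decisive feature: integrating the principal part in $t$ reconstitutes the \emph{monotone increment}, since
$$\int_0^1 A_{\bar u+tv}[v,v]\,\mathrm{d}t=\int_\Omega\langle\nabla\Psi(\nabla\bar u+\nabla v)-\nabla\Psi(\nabla\bar u),\nabla v\rangle\,\mathrm{d}x,$$
so the non-persistence of the bilinear forms $A_u$ is bypassed. Bounding $\nabla^2\Psi$ below by $\mu_1\nabla^2\Psi_\kappa$ and invoking the strong monotonicity of the $p$-area operator gives
$$\int_\Omega\langle\nabla\Psi(\nabla\bar u+\nabla v)-\nabla\Psi(\nabla\bar u),\nabla v\rangle\,\mathrm{d}x\ \ge\ c_1\int_\Omega\big(\kappa^2+|\nabla\bar u|^2+|\nabla v|^2\big)^{\frac{p-2}{2}}|\nabla v|^2\,\mathrm{d}x,$$
a lower bound valid for all $v$ with no loss from $\mathcal U$. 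Likewise $\int_0^1 K_{\bar u+tv}[v,v]\,\mathrm{d}t=\int_\Omega[g(x,\bar u+v)-g(x,\bar u)]v\,\mathrm{d}x$, whose leading part is $K[v,v]$ and whose remainder is of higher order by the subcriticality in $(g)$ (this is what accommodates $p\le n$). The whole estimate then reduces to $-\langle(\mathrm{Id}-\mathcal K)v,v\rangle_{\mathcal H}$ plus an error, bounded above by $-\delta\|v\|_{\mathcal H}^2$ plus that error, and the theorem follows once the error is absorbed. I expect the main obstacle to be precisely this absorption for the principal term: one must show that the deviation of the monotone increment from its quadratic model $\|v\|_{\mathcal H}^2$ is $o(\|v\|_{\mathcal H}^2)$ in the \emph{weak} norm, whereas $C^2$-regularity alone controls it only by $\|v\|_X^2$. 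Reconciling this weak--strong gap, i.e. quantifying exactly how much the splitting fails to persist, is the crux, and it is here that the precise convexity estimates furnished by $(\Psi)$, in the spirit of \cite{Asselle:2024}, are essential.
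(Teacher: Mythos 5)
Your setup coincides with the paper's: the Hilbert extension $\mathbb H$ with inner product $\int_\Omega\Psi''(\nabla\bar u)[\nabla v,\nabla w]\,\mathrm dx$, the representation of $\mathrm d^2f(\bar u)$ as $\mathrm{Id}-\mathcal K$ with $\mathcal K$ compact, elliptic regularity placing the finite-dimensional negative eigenspace inside $X$, the hyperbolic operator $L=\pm\mathrm{id}$ on the two factors, and the reduction of the Lyapunov inequality to $\int_0^1\mathrm d^2f(\bar u+tv)[v,Lv]\,\mathrm dt<0$ via Taylor with integral remainder. All of this is exactly Section 3 of the paper. The gap is that the one genuinely hard step is not carried out, and your proposed substitute for it does not close. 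Writing $v=v^-+v^+$, the cross terms cancel by symmetry and the integrand is $\mathrm d^2f(\bar u+tv)[v^-,v^-]-\mathrm d^2f(\bar u+tv)[v^+,v^+]$; the finite-dimensional part is fine, but one needs a \emph{lower} bound on $\int_0^1\mathrm d^2f(\bar u+tv)[v^+,v^+]\,\mathrm dt$ of the form $c_1\|v^+\|^2_{\mathbb H}-c\|v^-\|^2$. Your monotone-increment identity reconstitutes $\int_0^1 A_{\bar u+tv}[v,v]\,\mathrm dt$, i.e.\ the \emph{diagonal} term tested on the full increment $v$ along the path $\bar u+tv$; it says nothing about $\int_0^1 A_{\bar u+tv}[v^+,v^+]\,\mathrm dt$, which differs from it by $\int_0^1\bigl(A_{\bar u+tv}[v^-,v^-]+2A_{\bar u+tv}[v^-,v^+]\bigr)\mathrm dt$. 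Controlling precisely these interaction terms (integrals of the type $\int_\Omega|\nabla u|^{p-2}|\nabla u^-||\nabla u^+|$ for $p\le n$) by quantities of order strictly higher than $2$ in $\|u^-\|$ and $\|\nabla u^+\|_p$, plus small multiples of the good terms, is the entire content of the paper's Lemma \ref{lem:p*-2} and occupies several pages of H\"older and Young estimates; you do not address it.

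The second unproved assertion is that the remainder of $\int_0^1 K_{\bar u+tv}[v,v]\,\mathrm dt-K[v,v]$ is ``of higher order by subcriticality''. Higher order in which norm? By $C^2$-regularity of $f$ on $X$ it is $o(\|v\|_X^2)$, but the inequality you need is in the weak norm $\|v\|_{\mathbb H}^2$, and for $q\ge p^*_{C^2}=2p^*/n$ the lower-order part of $f$ is \emph{not} $C^2$ on $\mathbb H$, so no such estimate holds in general (this is exactly why Lemma \ref{lem:p*c2} fails in that range and why the counterexample \eqref{eq:counterexample} is relevant). The paper's resolution is to peel off an auxiliary functional $t_\epsilon(u)=\frac\epsilon p\|\nabla u\|_p^p-\frac{\beta}{p^*(p^*-1)}\|u\|_{p^*}^{p^*}$ absorbing the supercritical growth, prove local uniform convexity of $f-t_\epsilon$ in the $X^+$-direction, and then estimate $\mathrm d^2t_\epsilon(u)[u^+,u^+]$ through the decomposition $[u,u]-[u^-,u^-]-2[u^-,u^+]$ with the interaction estimates mentioned above. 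You correctly identify this weak--strong reconciliation as ``the crux'', but identifying the crux is not the same as resolving it: as written, the proposal proves the theorem only in the regime $q<p^*_{C^2}$ already covered by Lemma \ref{lem:p*c2}, and leaves the new case $q\in[p^*_{C^2},p^*-2)$ open.
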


Adapting the argument in \cite{Cingolani:2007} one sees that, generically, the second differential is injective at every critical point $\bar u$ of $f$. As a corollary of Theorem \ref{thm:main1}, critical points of $f$ as in \eqref{eq:functionalf} are generically non-degenerate. 

The extension to $p\leq n$ is the most challenging step in the proof of Theorem \ref{thm:main1} and the major element of novelty of the present paper. 
 The key step, which is the main content of Section \ref{sec:hyperbolicoperator}, consists in showing that the desired hyperbolic operator still exists even if 
 the splitting $X=X^+\oplus X^-$ induced by $\mathrm d^2f(\bar u)$ does not persist in a neighborhood of $\bar u$, provided one can give precise estimates on how much persistence fails, see Lemma \ref{lem:p*-2}. Abstracting from the concrete situation of the present paper, we also state in Theorem \ref{thm:criterionnondeg} a criterion to check non-degeneracy for critical points of $C^2$-functionals defined on a Banach space. 

Having (generically) non-degeneracy, one can then construct Morse homology for functionals $f$ as in \eqref{eq:functionalf} following a standard scheme. In the literature however, Morse homology is always constructed assuming the Palais-Smale condition, and it is well-known (see Section 4 for an overview) that, in several cases, functionals as in \eqref{eq:functionalf} do not satisfy the Palais-Smale condition but rather the weaker Cerami condition.  
For this reason, in Section \ref{sec:MorseCerami} we give a sketch of the construction of Morse homology in an abstract Banach framework assuming the Cerami condition only. 

\begin{thm}
Let $f:X\to \R$ be a functional as in \eqref{eq:functionalf} satisfying $(\Psi)$ and $(g)$ and such that $\mathrm d^2f(\bar u):X\to X^*$ is injective for every $\bar u \in \text{Crit}\, (f)$. Assume that $f$ satisfies the Cerami condition, and choose $P\subset X$ open subset which is positively invariant under the flow of a negative gradient-like vector field $V$  for $f$ (for instance, $P=f^{-1}(-\infty,a)$ for some $a\in \R$) and such that $f$ is bounded from below on $X\setminus P$. Then, Morse homology with $\Z_2$-coefficients for $f$ is well-defined and isomorphic to $H_*(Y,P;\Z_2)$. 
\label{thm:main2}
\end{thm}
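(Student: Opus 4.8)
The plan is to follow the standard scheme for constructing Morse homology from the dynamics of a negative gradient-like vector field, with the one real novelty being that the Cerami condition, rather than Palais--Smale, is the only compactness input available. Since $\mathrm d^2 f(\bar u):X\to X^*$ is injective at every $\bar u\in\text{Crit}(f)$, Theorem \ref{thm:main1} ensures that each critical point is non-degenerate in the sense of Definition \ref{def:nondegintro}, so $f$ is a Morse function; in particular each $\bar u$ carries a hyperbolic operator $L$ and, as recalled after Definition \ref{def:nondegintro}, well-defined stable and unstable manifolds $W^s(\bar u)$, $W^u(\bar u)$ of the expected dimension, with $\dim W^u(\bar u)=\ind(\bar u)$. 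For functionals of the form \eqref{eq:functionalf} the Morse index is always finite (the footnote issue does not arise), because the strictly convex principal part coming from $(\Psi)$ leaves only finitely many negative directions for the subcritical perturbation governed by $(g)$. I would then perturb the given vector field $V$ within the class of negative gradient-like vector fields for $f$ so as to arrange the \emph{Morse--Smale} transversality $W^u(x)\pitchfork W^s(y)$ for all pairs of critical points; since the unstable manifolds are finite-dimensional, this is achieved generically by a Sard--Smale argument.

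Next I would set up the moduli spaces $\M(x,y)$ of unparametrized flow lines of $-V$ running from $x$ to $y$ and establish their compactness; this is the main obstacle. Along any such flow line the values of $f$ lie between the two critical values, so sequences of connecting orbits have uniformly bounded energy, and the only danger is escape to infinity. Here the positive invariance of $P$ together with the boundedness of $f$ from below on $X\setminus P$ confine the relevant trajectories to a region on which bounded-energy approximate-critical sequences subconverge. The technical heart is to run the usual convergence-and-gluing analysis with the weaker Cerami estimate in place of Palais--Smale: the weight $(1+\|u\|)$ permits orbits to slow down near infinity, and one must check that the confinement furnished by $P$ compensates for this. The Cerami condition is already known to suffice for the deformation lemmas of critical point theory, and the task is to upgrade this to compactness of the full moduli spaces: one obtains that $\M(x,y)$ is a manifold of dimension $\ind(x)-\ind(y)-1$, finite when this number is $0$, and with boundary the once-broken trajectories when it is $1$.

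Finally I would define the Morse complex $(C_*,\partial)$ by letting $C_k$ be the $\Z_2$-vector space generated by the critical points of $f$ in $X\setminus P$ of index $k$, and setting $\partial_k\langle x\rangle:=\sum_{\ind(y)=k-1}\#\,\M(x,y)\,\langle y\rangle$, where $\#$ is cardinality modulo two. The relation $\partial^2=0$ follows from the description of the boundary of the one-dimensional moduli spaces: each broken trajectory through an intermediate critical point of index $k-1$ arises as an endpoint of a compact $1$-manifold, which has an even number of boundary points, so the corresponding mod-two count vanishes. A standard continuation argument shows that the resulting homology $HM_*(f)$ is independent of the choices of $V$ and of the transversality perturbation. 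To identify it with $H_*(X,P;\Z_2)$, I would use the filtration of $X$ by the negative gradient flow: the unstable manifolds, attached in order of increasing index, equip the pair $(X,P)$ with a CW-like structure whose cellular chain complex is precisely $(C_*,\partial)$, so that $HM_*(f)\cong H_*(X,P;\Z_2)$ is the usual identification of cellular with singular homology; equivalently, one builds a chain homotopy equivalence between the Morse complex and the singular chain complex of $(X,P)$ directly from the flow.
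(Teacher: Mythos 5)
Your overall architecture matches the paper's: non-degeneracy via Theorem \ref{thm:main1}, finiteness of the Morse indices (correct, since the negative eigenspace of the Hilbert extension of $\mathrm d^2f(\bar u)$ is finite dimensional), a generic perturbation to Morse--Smale transversality up to the relevant order, a $\Z_2$-count of connecting orbits, and the identification with $H_*(X,P;\Z_2)$. However, at the one point where the theorem actually departs from the standard Palais--Smale story you identify the difficulty but do not resolve it, and you attribute the resolution to the wrong mechanism. You write that ``the positive invariance of $P$ together with the boundedness of $f$ from below on $X\setminus P$ confine the relevant trajectories'' to a good region. That is not what $P$ does: its only role is to make the sum defining $\partial_k$ finite and to specify which relative homology one computes. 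The confinement of connecting orbits to a bounded set must come from the Cerami condition alone, and this is precisely the content of the paper's Proposition \ref{prop:1}: for any flow line $u$ of $V$ with $F(u(\R))\subset[a,b]$ one has $\|u(t)\|<R(a,b)$. The proof is not a routine adaptation of the Palais--Smale argument; it uses the quantitative form of Cerami, namely the existence of $\epsilon,r_0>0$ with $(1+\|y\|)\,\|\mathrm dF(y)\|>\epsilon$ on $F^{-1}([a,b])\cap B_{r_0}(0)^c$, to derive an integral inequality of the form
$$\|u(t)\|\;\le\;\alpha(t)+\int_{t_0}^{t}\|u(s)\|\,\beta(s)\,\mathrm ds,
\qquad
\alpha(t)=r_0+\tfrac{1}{2\epsilon}\bigl(F(u(t_0))-F(u(t))\bigr),\quad
\beta=-\tfrac{1}{2\epsilon}\tfrac{\mathrm d}{\mathrm ds}(F\circ u),$$
and then Gr\"onwall's lemma, the total variation of $F$ along the orbit being at most $b-a$. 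Your phrase ``one must check that the confinement furnished by $P$ compensates for this'' is exactly the step that is missing, and as stated it points in the wrong direction: no choice of $P$ would supply this bound.

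A secondary, smaller issue: once boundedness of $W^u(x,V)\cap W^s(y,V)$ is established, precompactness still requires the finite dimensionality of the unstable manifolds together with the Palais--Smale property on bounded sets (which Cerami does imply); the paper delegates this to \cite[Proposition 2.2]{Abbondandolo:2006lk}. Your appeal to ``the usual convergence-and-gluing analysis'' is acceptable shorthand for the rest, and your cellular-filtration sketch of the isomorphism with $H_*(X,P;\Z_2)$ is a legitimate alternative to the paper's citation of \cite{Abbondandolo:2006lk}, but neither of these substitutes for the missing Gr\"onwall estimate.
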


\vspace{2mm}

 \textbf{Acknowledgments:} L.A. is partially supported by the DFG-grant 540462524 {\sl \lq\lq Morse theoretical methods in Analysis, Dynamics and Geometry"}. S.C. is supported by PRIN PNRR  P2022YFAJH {\sl \lq\lq Linear and Nonlinear PDEs: New directions and applications''} (CUP H53D23008950001), and partially supported by INdAM-GNAMPA.   M.S. is partially supported by the NCN grant 2023/05/Y/ST1/00186 {\sl \lq\lq Morse theoretical methods in Analysis, Dynamics and Geometry''}. 
 



\section{Morse homology and the Cerami condition}
\label{sec:MorseCerami}

In this section we quickly recall the construction of Morse homology in an abstract Banach setting when the Morse index is finite, referring to \cite{Asselle:2024} and \cite{Abbondandolo:2006lk} for further details. The element of novelty here is that, in contrast with the cited references, we only assume the Cerami condition.

\begin{dfn}
Let $Y$ be a Banach space, and let $F:Y\to \R$ be a functional of class $C^1$. A critical point $y_0$ of $F$ is called \textit{non-degenerate} if there exist a neighborhood $\mathcal U\subset Y$ of $y_0$ and a bounded linear hyperbolic operator $L:Y\to Y$ such that $F$ is a Lyapounov function on $\mathcal U$ for the linear flow induced by $L$. We call $F$ a \textit{Morse function} if all its critical points are non-degenerate. 
\label{def:nondeg}
\end{dfn}


\begin{rmk}
In Morse theory one usually requires that the second differential $\mathrm d^2F(y_0):Y\to Y^*$ be an isomorphism. As it easily seen, such a requirement is meaningless in a Banach setting because the dual space $Y^*$ is in general not isomorphic to $Y$. Also, the weaker assumption that $\mathrm d^2F(y_0)$ be injective is in general not enough to do Morse theory as it does not even imply that the critical points are isolated. 
\end{rmk}

We recall that a bounded linear operator $L$ is called \textit{hyperbolic} if its spectrum is disjoint from the imaginary axis, i.e. $\sigma(L)\cap i\R = \emptyset$. The operator $L$ induces a splitting of $Y$ 
into $L$-invariant subspaces
$$Y= Y^+\oplus Y^-,$$
where $\sigma (L|_{Y^{\pm}}) = \sigma (L) \cap \{ \pm \text{Re} (z) >0\}$, and we call 
$$\mu_-(y_0) := \dim Y^- \in \N_0\cup \{+\infty\}, \quad \mu_+(y_0) := \dim Y^+ \in \N_0\cup \{+\infty\}$$
the \textit{Morse index} resp. \textit{co-index} of $y_0$. 

A Morse function $F:Y\to \R$ always admits a (negative) gradient-like vector field $V$ whose regularity depends on the regularity of the partition of unity\footnote{For our purposes, $Y$ will always admit smooth partitions of unity (this is indeed the case if $Y$ is reflexive, see \cite{Fry:2002}).}. The construction goes as follows\footnote{In case of infinite Morse indices extra care is needed in order not to lose crucial compactness properties.}: in a neighborhood $\mathcal U$ of a critical point $y_0$ we define $V$ to be the linear vector field $V(x) := Lx$, where $L$ is the hyperbolic operator appearing in the definition of non-degeneracy. If instead $y\in Y$ is not a critical point, then we choose a sufficiently small neighborhood $\mathcal U_y$ and a vector $v\in Y$ such that 
$$\|v\| \leq 2 \|\mathrm dF(y)\|, \quad \mathrm dF(z)[v]\leq -  \|\mathrm dF(z)\|^2, \forall z\in \mathcal U_y$$ 
(this is possible since $\mathrm dF$ is continuous) and then define $V(z):= v$ for all $v\in \mathcal U_y$. By means of a partition of unity we then 
glue all these local vector fields to obtain a global gradient-like vector field $V$\footnote{The vector field $V$ will be in general only gradient-like, even though it is by construction a pseudo-gradient for $F$ outside a neighborhood of the set of critical points. Also, to get a complete flow it might be necessary to replace $V$ with $V/\sqrt{1+\|V\|^2}$.}. 

In particular, unstable and stable manifold $W^{s}(y_0,V)$ and $W^s(y_0,V)$ of a critical point $y_0\in Y$ are well-defined open embedded submanifolds\footnote{This actually does not follow directly from non-degeneracy, as non-degeneracy in general does not imply local closedness of stable and unstable manifolds. In this paper we can ignore such an issue, since we are dealing with critical points having finite Morse index only. For more details we refer to Theorem 1.20 in \cite{Abbondandolo:2006lk}.} of $Y$ homeomorphic to open disks of dimension equal the Morse index and co-index of $y_0$ respectively. 

\begin{dfn}
Let $Y$ be a Banach space, and let $F:Y\to \R$ be a functional of class $C^1$. We say that $F$ satisfies the \textit{Cerami condition} if the following holds: any sequence $(y_n)\subset Y$ such that 
$$f(y_n) \to c, \quad \|\mathrm dF(y_n)\| (1+ \|y_n\|) \to 0, \quad \text{for}\ c\in \R, \ n \to +\infty,$$
admits a converging subsequence. 
\end{dfn}

The Cerami condition implies the Palais-Smale condition on bounded sets but is in general weaker than the global Palais-Smale condition, since it requires precompactness of a smaller set of sequences. As it is well-known, there is no global Morse theory (in any of its forms) for functionals satisfying Palais-Smale only on bounded sets. In the following proposition we show instead that the Cerami condition implies that the intersection between stable and unstable manifolds of pairs of critical points is contained in a bounded region, which is a crucial step towards the construction of Morse homology. 

\begin{rmk}
The Cerami condition implies that, for every $-\infty<a<b<+\infty$, the set 
$$\text{Crit}\, (F) \cap F^{-1}([a,b])$$
is compact. Indeed, any sequence $(y_n)\subset \text{Crit}\, (F) \cap F^{-1}([a,b])$ is a Cerami sequence (possibly after passing to a subsequence) and as such admits a converging subsequence. 
\end{rmk}

\begin{prop}
Let $F:Y\to \R$ be a Morse function satisfying the Cerami condition, and let $V$ be a (negative) gradient-like vector field for $F$ as above. Then, for every $-\infty<a<b<+\infty$ there exists $R=R(a,b)>0$ such that any flow-line of $V$ contained in $F^{-1}([a,b])$ stays completely in $B_R(0)$, the (open) ball of radius $R$ in $Y$. In other words, any $u:\R\to Y$ such that 
$$\dot u(t) = V(u(t)), \quad \forall t\in \R,$$
and $F(u(t))\in [a,b]$ for all $t\in \R$, satisfies $\|u(t)\|<R$ for all $t\in \R$. 
\label{prop:1}
\end{prop}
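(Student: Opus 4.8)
The plan is to argue by contradiction, producing from an unbounded family of flow-lines a Cerami sequence that escapes to infinity, which the Cerami condition forbids. So suppose the statement fails for some $a<b$: then there are flow-lines $u_n:\R\to Y$ of $V$ with $F(u_n(t))\in[a,b]$ for all $t$, and times $t_n$ with $\rho_n:=\|u_n(t_n)\|\to+\infty$. I want to show this contradicts the Cerami condition, which will give a uniform bound $R(a,b)$ on $\sup_t\|u(t)\|$ over all such flow-lines.

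The first ingredient I would set up is an energy/arc-length estimate. Using the Remark that $\mathrm{Crit}(F)\cap F^{-1}([a,b])$ is compact, hence contained in some ball $B_{R_1}(0)$, and shrinking the neighbourhoods used in the construction of $V$, the field $V$ is a genuine pseudo-gradient on $F^{-1}([a,b])\setminus B_{R_1}(0)$, i.e. there $\mathrm dF(y)[V(y)]\le-\|\mathrm dF(y)\|^2$ and $\|V(y)\|\le 2\|\mathrm dF(y)\|$. Since the trajectory, and hence the statement, is insensitive to the time-reparametrisation $V\mapsto V/\sqrt{1+\|V\|^2}$ used to complete the flow, I may parametrise a flow-line by arc length $s$; the two inequalities then give $-\tfrac{\diff}{\diff s}F=-\mathrm dF[V/\|V\|]\ge\tfrac12\|\mathrm dF\|$, so that along any arc lying in $F^{-1}([a,b])\setminus B_{R_1}(0)$
\[
\int_\gamma \|\mathrm dF\|\,\diff s \;\le\; 2(b-a).
\]

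Next I would run a dyadic argument. Fix $\rho_0\ge\max\{1,R_1\}$ and set $A_k:=\{y:2^k\rho_0\le\|y\|<2^{k+1}\rho_0\}$. Because each $u_n$ descends into $B_{\rho_0}$ (its $\alpha$- and $\omega$-limits are critical points inside $B_{R_1}$) and reaches norm $\rho_n$, its trajectory crosses every annulus $A_k$ with $2^{k+1}\rho_0\le\rho_n$, and the arc length inside $A_k$ is at least the width $2^k\rho_0$. Writing $L_k:=\int_{A_k}\|\mathrm dF\|\,\diff s$, so that $\sum_k L_k\le 2(b-a)$ by the estimate above, there is a point $z\in A_k$ on the trajectory with $\|\mathrm dF(z)\|\le L_k/(2^k\rho_0)$, and since $\|z\|\le 2^{k+1}\rho_0$ this yields the crucial bound
\[
\|\mathrm dF(z)\|\,(1+\|z\|)\;\le\; 4L_k .
\]
For each $n$ there are $\sim\log_2(\rho_n/\rho_0)\to\infty$ admissible $k$; among those in the top half of the range (for which $\|z\|\ge\sqrt{\rho_0\rho_n}\to\infty$) I pick $k_n$ with $L_{k_n}$ minimal, so that $L_{k_n}\le 4(b-a)/\log_2(\rho_n/\rho_0)\to0$. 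The resulting points $z_n$ satisfy $F(z_n)\in[a,b]$, $\|z_n\|\to\infty$ and $\|\mathrm dF(z_n)\|(1+\|z_n\|)\to0$, i.e. $(z_n)$ is a Cerami sequence with no convergent subsequence, contradicting the Cerami condition.

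The main obstacle I anticipate is exactly the dyadic bookkeeping in the third step: a single annulus only gives $\|\mathrm dF(z)\|(1+\|z\|)=O(1)$, and upgrading this to decay $\to0$ forces one to spread the finite energy budget $2(b-a)$ over the growing number of annuli and to select a good annulus at large norm. A secondary, more technical point is justifying that each flow-line really descends into the fixed ball $B_{\rho_0}$, and hence traverses the full dyadic range up to $\rho_n$; this rests on the compactness of $\mathrm{Crit}(F)\cap F^{-1}([a,b])$ together with the fact, standard under Cerami, that the limit sets of a bounded-energy flow-line are nonempty and consist of critical points (so they lie in $B_{R_1}$ and cannot coexist with a trajectory confined to large norm).
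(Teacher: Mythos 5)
Your argument is correct in outline but takes a genuinely different route from the paper's. The paper first converts the Cerami condition into a uniform quantitative bound: there exist $\epsilon,r_0>0$ with $(1+\|y\|)\,\|\mathrm dF(y)\|>\epsilon$ for all $y\in F^{-1}([a,b])\cap B_{r_0}(0)^c$ (otherwise one extracts exactly the escaping Cerami sequence you produce at the very end). It then runs a direct integral estimate along a single excursion of the flow-line outside $B_{r_0}(0)$, starting from a time $t_0$ with $\|u(t_0)\|=r_0$: combining $\|V\|\lesssim\|\mathrm dF\|$, the lower bound above and $\|\mathrm dF\|^2\leq-\mathrm dF[V]$, one arrives at $\|u(t)\|\leq\alpha(t)+\int_{t_0}^t\|u(s)\|\beta(s)\,\mathrm ds$ with $\alpha\leq r_0+(b-a)/(2\epsilon)$ and $\int\beta\leq(b-a)/(2\epsilon)$, and Gr\"onwall yields the explicit radius $R=\big(r_0+\frac{b-a}{2\epsilon}\big)e^{(b-a)/(2\epsilon)}$. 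Your dyadic decomposition replaces the Gr\"onwall step: rather than showing the norm cannot grow, you show that large growth of the norm forces $(1+\|z\|)\,\|\mathrm dF(z)\|$ to be small somewhere at large norm, because the fixed budget $\int\|\mathrm dF\|\,\mathrm ds\leq 2(b-a)$ must be spread over $\log_2(\rho_n/\rho_0)$ annuli, each of which costs arc length at least its width. Both arguments are sound and invoke Cerami in the same two places (compactness of $\mathrm{Crit}(F)\cap F^{-1}([a,b])$, and nonexistence of escaping Cerami sequences); the paper's is shorter and gives an explicit $R(a,b)$, while yours is more geometric and makes visible why the weight $(1+\|y\|)$ in the Cerami condition is exactly what is needed to beat the linear growth of the annuli widths.

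The one step you should not wave away is the claim that each $u_n$ descends into $B_{\rho_0}$ ``because its $\alpha$- and $\omega$-limits are critical points inside $B_{R_1}$''. Nonemptiness of the limit sets is \emph{not} free here: a priori a full flow-line in $F^{-1}([a,b])$ could stay forever outside every fixed ball, in which case its limit sets are empty and it need not cross your lower annuli at all (in the extreme case it could remain inside a single annulus at huge norm, and the pigeonhole then yields nothing). This case has to be excluded separately, for instance as follows: if a full flow-line in $F^{-1}([a,b])$ has $\inf_t\|u(t)\|>R_1+1$ and is bounded, then finite arc length forces convergence to a zero of $V$, and infinite arc length together with $\int\|\mathrm dF\|\,\mathrm ds\leq 2(b-a)$ produces a bounded Palais--Smale sequence; either way one obtains a critical point outside $B_{R_1}(0)$, a contradiction. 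If instead it is unbounded, your dyadic argument applied to the infinitely many annuli above $\inf_t\|u(t)\|$ already produces an escaping Cerami sequence. With this lemma every flow-line in $F^{-1}([a,b])$ meets $\overline{B_{R_1+1}(0)}$ and your main argument closes. (To be fair, the paper's proof makes the same tacit assumption, since it only estimates excursions that start on the sphere $\|u(t_0)\|=r_0$.)
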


\begin{cor}
Let $y_0,y_1$ be critical points of the Morse function $F:Y\to \R$, and let $V$ be a (negative) gradient-like vector field for $F$ as above. Then, there exists $R>0$ such that 
$$W^u(y_0,V) \cap W^s(y_1,V) \subset B_R(0).$$
\label{cor:boundedness}
\end{cor}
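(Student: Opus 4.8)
The plan is to reduce the statement directly to Proposition \ref{prop:1} by observing that every point of the intersection lies on a flow line whose $F$-values are trapped in a fixed compact interval determined by $y_0$ and $y_1$.

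First I would fix an arbitrary point $x\in W^u(y_0,V)\cap W^s(y_1,V)$ and let $u:\R\to Y$ be the flow line of $V$ with $u(0)=x$. By the very definition of the unstable and stable manifolds, $u(t)\to y_0$ as $t\to-\infty$ and $u(t)\to y_1$ as $t\to+\infty$. Since $V$ is a negative gradient-like vector field, $t\mapsto F(u(t))$ is non-increasing: along the flow one has $\dt F(u(t))=\mathrm dF(u(t))[V(u(t))]\leq 0$, the inequality being strict away from critical points and holding with equality at critical points because there $V=L\,\cdot$ and $F$ is a Lyapounov function for the linear flow of $L$. Combining this monotonicity with the continuity of $F$ and the asymptotics at the two endpoints yields $\lim_{t\to-\infty}F(u(t))=F(y_0)$ and $\lim_{t\to+\infty}F(u(t))=F(y_1)$, so that $F(y_1)\leq F(u(t))\leq F(y_0)$ for every $t\in\R$. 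In particular, if $F(y_1)>F(y_0)$ the intersection must be empty and there is nothing to prove; otherwise I set $a:=F(y_1)$ and $b:=F(y_0)$.

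The whole flow line $u$ is then contained in $F^{-1}([a,b])$, and Proposition \ref{prop:1} provides a radius $R=R(a,b)>0$ such that $\|u(t)\|<R$ for all $t\in\R$. Evaluating at $t=0$ gives $\|x\|<R$. Crucially, $R$ depends only on $a$ and $b$, hence only on the two critical points $y_0,y_1$, and not on the chosen point $x$; this is exactly where Proposition \ref{prop:1} does the real work, since it is precisely the Cerami condition that guarantees a uniform bound on all flow lines staying within the band $F^{-1}([a,b])$. I therefore conclude $W^u(y_0,V)\cap W^s(y_1,V)\subset B_R(0)$.

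The argument is essentially immediate once Proposition \ref{prop:1} is available; the only point requiring a little care is verifying that the flow line never leaves the band $F^{-1}([a,b])$, which rests on the monotonicity of $F$ along $V$ together with the identification of the limiting $F$-values at the asymptotic critical points. This monotonicity is indispensable and would fail for a vector field that is merely gradient-like without the decreasing property, so I would make sure to invoke the Lyapounov property near the critical points and the pseudo-gradient inequality away from them when justifying $\dt F(u(t))\le 0$.
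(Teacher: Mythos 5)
Your argument is correct and is exactly the deduction the paper intends (the corollary is stated without proof as an immediate consequence of Proposition \ref{prop:1}): every point of $W^u(y_0,V)\cap W^s(y_1,V)$ lies on a flow line trapped in $F^{-1}([F(y_1),F(y_0)])$ by monotonicity of $F$ along $V$, so the uniform radius $R(F(y_1),F(y_0))$ from the proposition applies. Your care in justifying the monotonicity near critical points via the Lyapounov property is appropriate and nothing is missing.
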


\begin{proof}[Proof of Proposition \ref{prop:1}]
The Cerami condition implies that there exist $\epsilon>0$ and $r_0>0$ such that 
\begin{equation}
(1+\|y\|)\|\mathrm dF(y)\| >\epsilon, \quad \forall y \in F^{-1}([a,b]) \cap B_{r_0}(0)^c.
\label{eq:Cerami1}
\end{equation}
In particular, the region $F^{-1}([a,b]) \cap B_{r_0}(0)^c$ does not contain critical points of $F$, and this implies that $V$ is a pseudo gradient for $f$ on such a region\footnote{Here for simplicity we ignore the fact that flow of $V$ could not be globally defined. The argument however can be easily adapted replacing $V$ with $V/\sqrt{1+\|V\|^2}$.}. 

Let now $u:\R\to Y$ be a flow-line for $V$ entirely contained in $F^{-1}([a,b])$. Assume that $t_0<t_1\in \R$ are such that $\|u(t_0)\|=r_0$ and $u|_{[t_0,t_1]}\subset B_{r_0}(0)^c$.  Then we compute for $t\in [t_0,t_1]$: 
\begin{align*}
\|u(t)\| - r_0 &\leq \|u(t)-u(t_0)\| \leq \int_{t_0}^t \|\dot u(s)\| \, \mathrm d s = \int_{t_0}^t \|V(u(s))\| \, \mathrm ds \leq \frac 12 \int_{t_0}^t \| \mathrm d F(u(s))\| \, \mathrm ds.
\end{align*}
Using \eqref{eq:Cerami1} and the definition of $V$ we infer that 
\begin{align*}
\|u(t)\| &\leq r_0 + \frac 12 \int_{t_0}^t \| \mathrm d F(u(s))\| \, \mathrm ds\\
	&\leq r_0 + \frac1{2\epsilon} \int_{t_0}^t \|\mathrm dF(u(s))\|^2 (1+ \|u(s)\|)\, \mathrm ds\\
	&\leq r_0 -  \frac1{2\epsilon} \int_{t_0}^t (1+ \|u(s)\|) \mathrm dF(u(s))[V(s)] \, \mathrm ds\\
	&= r_0 - \frac 1{2\epsilon} \int_{t_0}^t\mathrm dF(u(s))[V(s)] \, \mathrm ds - \frac 1{2\epsilon} \int_{t_0}^t  \|u(s)\| \mathrm dF(u(s))[V(s)] \, \mathrm ds\\
	&= r_0 + \frac 1{2\epsilon} \big (F(u(t_0)) - F(u(t))\big ) + \frac 1{2\epsilon} \int_{t_0}^t  \|u(s)\| \Big (- \frac{\mathrm d}{\mathrm d s} \big (F\circ u\big ) (s)\Big ) \, \mathrm ds\\
	&= \alpha(t) + \int_{t_0}^t \|u(s)\| \beta (s) \, \mathrm ds,
	\end{align*}
where 
$$\alpha(t) := r_0 + \frac 1{2\epsilon} \big (F(u(t_0)) - F(u(t))\big ) ,\quad \beta(t) := - \frac 1{2\epsilon} \frac{\mathrm d}{\mathrm d t} \big (F\circ u\big ) (t).$$
Since $\alpha$ is non-decreasing and $\beta$ is non-negative, Gr\"onwall's lemma implies that 
\begin{align*}
\|u(t)\| &\leq \alpha(t) \text{exp}\Big ( \int_{t_0}^t \beta(s) \, \mathrm ds\Big ) \\
	&= \big ( r_0 + \frac 1{2\epsilon} \big (F(u(t_0)) - F(u(t))\big )\big ) \text{exp} \Big ( \frac 1{2\epsilon} \big (F(u(t_0)) - F(u(t))\big )\Big ) \\
	&\leq \big ( r_0 + \frac 1{2\epsilon} (b - a)\big ) \text{exp} \Big ( \frac 1{2\epsilon} (b-a)\Big ) =:R,
\end{align*}
where we have used the assumption that $u(\cdot)\subset F^{-1}([a,b])$. This completes the proof. 
\end{proof}

Under the additional assumption that the Morse index of critical points of the Morse function $F$ is finite, Corollary \ref{cor:boundedness} implies that 
$$W^u(y_0,V) \cap W^s(y_1,V)$$
is precompact for any pair of critical points $(y_0,y_1)$; see \cite[Proposition 2.2]{Abbondandolo:2006lk} for further details. Once one has precompactness, the construction of Morse homology follows by standard arguments.

 After a generic perturbation of $V$ we can assume that the pair $(F,V)$ satisfies the Morse-Smale condition up to order 2, meaning that the intersection between stable and unstable manifolds of pairs of critical points is transverse\footnote{Notice that the regularity of $V$ (and not of $F$) is important to achieve transversality.} whenever the difference of Morse indices is less than  or equal to 2, see \cite{Asselle:2024}. Therefore, for any $y_0,y_1\in \text{Crif}\, (F)$ such that $\mu_-(y_0)-\mu_-(y_1)\leq 2$, we have that 
$$W^u(y_0,V) \cap W^s(y_1,V)$$
is a precompact manifold of dimension $\mu_-(y_0)-\mu_-(y_1)$. In the particular case $\mu_-(y_0)-\mu_-(y_1)=1$ it is easy to see that $W^u(y_0,V) \cap W^s(y_1,V)$ consists of finitely many flow-lines. 

Assume now that $P\subset Y$ is an open subset which is positively invariant under the flow of $V$ and such that $F$ is bounded from below on $Q:= Y\setminus P$ (for instance, we could take $P=F^{-1}(-\infty,a)$ for some $a\in \R$)\footnote{If $F$ is bounded from below on $Y$, then we can simply take $P=\emptyset$.}. We can now define a chain complex by setting 
$$C_k(F,Q): = \!\!\!\!\!\!  \bigoplus_{\footnotesize y\in \text{Crit}_Q (F), \ \mu_-(y)=k} \!\!\!\!\!\! \Z_2 \cdot \langle y\rangle ,\quad \forall k \in \N_0,$$
where $\text{Crit}_Q(F) := \text{Crit}\, (F) \cap Q$, and 
$$\partial_k: C_k(F,Q)\to C_{k-1}(F,Q), \quad \partial_k y := \!\!\!\!\!\!  \sum_{z\in \text{Crit}_Q (F), \ \mu_-(z)=k-1} \!\!\!\!\!\!  \big (\# \{ \text{c.c. of}\ W^{u}(y,V)\cap W^s(z,V) \} \  \text{mod} \, 2\big ) \cdot z.$$
The assumption that $F$ be bounded from below on $Q$ ensures that the sum in the definition of $\partial_k$ is a finite sum. Moreover, the Morse-Smale condition up to order 2 implies that $\partial_k \circ \partial_{k-1}=0$ for all $k\in \N$, that is, that $\partial_*$ is a boundary operator. Clearly, the chain complex depends on the choice of the gradient-like vector field $V$. In contrast, the resulting homology is independent of the choice of $V$ 
and is indeed isomorphic to the (relative) singular homology $H_*(Y,P;\Z_2)$, see \cite{Abbondandolo:2006lk}. 

Summarizing the content of this section, the notion of non-degeneracy given in Definition \ref{def:nondeg} is the one to prefer when dealing with functionals defined on a Banach space or manifold, because it is tailored 
for the construction of Morse homology (after possibly addressing the subtleties related to stable and unstable manifolds being true submanifolds, see Footnote 5). The drawback is that such a definition seems a priori hard to verify in concrete cases and it is in general unrelated to other notions of non-degeneracy which are easier to check and known to hold generically. With this in mind, in the next section we will show that, for the class of functionals in \eqref{eq:functionalf}, injectivity of the second differential at a critical point implies that the critical point is non-degenerate in the sense of Definition \ref{def:nondeg}. 


\section{Injectivity and non-degeneracy}
\label{sec:hyperbolicoperator}

Let $X:=W^{1,p}_0(\Omega)$ with $\Omega\subset \R^n$ open bounded set and $p\in (2,n]$, and let $f:X\to \R$ be a functional as in \eqref{eq:functionalf}, with $\Psi$ and $g$ satisfying the conditions ($\Psi)$ and ($g$).  
 If $\bar u\in X$ is a critical point of $f$, then a straightforward computation shows that 
\begin{equation}
\label{eq:df2}
\mathrm d^2f(\bar u)[v,w] = \int_\Omega \Psi''(\nabla \bar u)[\nabla v,\nabla w]\, \mathrm dx - \int_\Omega \partial_s g(x,\bar u(x)) v(x)w(x)\, \mathrm d x.
\end{equation}
where $\Psi''$ denotes the Hessian of $\Psi$. Hereafter we assume that the second differential at $\bar u$, seen as an operator $\mathrm d^2f(\bar u):X\to X^*$, is injective. 

\begin{rmk}
Adapting the argument in \cite{Cingolani:2007} one sees that, after a generic perturbation of the function $g$, we can assume that the second differential be injective at every critical point $\bar u$ of $f$. We shall stress that the proof of this fact does not follow from the Sard-Smale theorem because $\mathrm d^2f(\bar u)$ is not a Fredholm operator. The proof is instead based on a finite dimensional reduction argument in a neighborhood of the set of critical points which allows us to apply the classical Sard's lemma in a finite dimensional setting. For future applications it would be interesting to see if such a genericity statement can be obtained directly at the Banach space level. First steps in this direction can be found e.g. in \cite{Lerario:2024}.
\end{rmk}

\begin{thm}
Assume that $f$ is of the form \eqref{eq:functionalf} with $\Psi$ and $g$ satisfying $(\Psi)$ and $(g)$. If $\bar u \in X$ is a critical point of $f$ such that $\mathrm d^2f(\bar u):X\to X^*$ is injective, then $\bar u$ is non-degenerate in the sense of Definition \ref{def:nondeg}.
\label{thm:nondeg}
\end{thm}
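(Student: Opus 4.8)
The plan is to produce, near $\bar u$, a bounded hyperbolic operator $L:X\to X$ for which $f$ is a Lyapunov function, i.e. such that $\mathrm d f(\bar u+v)[Lv]<0$ for every small $v\neq 0$; by Definition \ref{def:nondeg} this is exactly what non-degeneracy requires. Writing $\mathrm d f(\bar u+v)[Lv]=\int_0^1 \mathrm d^2f(\bar u+tv)[v,Lv]\,\mathrm dt$ (recall $\mathrm d f(\bar u)=0$), the natural object is the quadratic form $B:=\mathrm d^2f(\bar u)$, which by \eqref{eq:df2} splits as $B=a-b$ with principal part $a(v,w):=\int_\Omega \Psi''(\nabla\bar u)[\nabla v,\nabla w]\,\mathrm dx$ and lower-order part $b(v,w):=\int_\Omega\partial_s g(x,\bar u)vw\,\mathrm dx$. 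Assumption $(\Psi)$ makes $a$ positive definite, with $a(v,v)\geq \mu_1\kappa^{p-2}\|\nabla v\|_{L^2}^2$, so $a$ is an inner product; I let $H$ be the completion of $X$ for $a$, giving continuous dense inclusions $X\hookrightarrow H\hookrightarrow W^{1,2}_0(\Omega)$, and I regard $B$ as the symmetric form $v\mapsto \|v\|_a^2-b(v,v)$ on $H$.

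First I would build the splitting. The subcriticality $q<p^*-2$ in $(g)$ guarantees that $b$ is of strictly lower order, so the essential spectrum of $B$ relative to $a$ stays nonnegative and the Morse index of $\bar u$ is finite. Hence there is a finite-dimensional subspace $X^-$ on which $B$ is negative definite, whose elements, being weak eigenfunctions of a linear elliptic problem, are regular and in particular lie in $X$; its $a$-orthogonal complement $X^+:=X\cap H^+$ carries $B\geq 0$. The injectivity of $\mathrm d^2f(\bar u):X\to X^*$ rules out a kernel, so $X=X^-\oplus X^+$ is a genuine topological splitting. I then set $L:=2P^--\mathrm{id}$, i.e. $L=+\mathrm{id}$ on $X^-$ and $L=-\mathrm{id}$ on $X^+$, where $P^-$ projects onto $X^-$. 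Since $X^-$ is finite-dimensional and contained in $X$, $P^-$ and hence $L$ are bounded on $X$; moreover $\sigma(L)=\{\pm1\}$ is disjoint from $i\mathbb R$, so $L$ is hyperbolic with Morse index $\dim X^-<\infty$.

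The heart of the argument, and the main obstacle, is the Lyapunov inequality on $X^+$, where $L=-\mathrm{id}$. It is tempting to reduce it to a linear condition $B(v,Lv)\leq -c\|v\|_X^2$, but for $p\leq n$ this is \emph{impossible}: concentrating functions satisfy $\|v\|_a/\|v\|_X\to 0$, so $B$ cannot be coercive for the $W^{1,p}$-norm, and the $H$-positivity on $X^+$ carries no uniform gap (this is precisely the failure of persistence of the splitting, which for $p>n$ was available because $b$ is then $a$-compact). I would therefore keep the nonlinear structure and estimate $\mathrm d f(\bar u+v)[-v]=-P(v)+S(v)$ directly, where $P(v):=\int_\Omega[\nabla\Psi(\nabla\bar u+\nabla v)-\nabla\Psi(\nabla\bar u)]\cdot\nabla v\,\mathrm dx$ and $S(v):=\int_\Omega[g(x,\bar u+v)-g(x,\bar u)]v\,\mathrm dx$. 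The strong monotonicity of $\nabla\Psi$ coming from $(\Psi)$ controls $P(v)$ from below both by its quadratic part $a(v,v)$ and by a genuinely superquadratic term of order $\|v\|_X^p$, while $S(v)=b(v,v)+R_g(v)$ with $R_g$ of higher order. The leading contribution is then $-a(v,v)+b(v,v)=-B(v,v)\leq 0$ on $X^+$, and the problem becomes to absorb the semilinear remainder $R_g$ and the cross terms with $X^-$ into the surviving coercivity, namely the superquadratic gain of order $\|v\|_X^p$ together with $B(v,v)$.

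This absorption is the delicate point and is exactly the content of Lemma \ref{lem:p*-2}: interpolating the remainder between the $L^{2^*}$-norm (controlled by $\|\cdot\|_a$) and the $L^{p^*}$-norm (controlled by $\|\cdot\|_X$), the condition $q<p^*-2$ forces $R_g$ to be strictly dominated by the superquadratic gain, so that $\mathrm d f(\bar u+v)[-v]<0$ for small $v\neq 0$ even along the concentrating directions on which $B$ degenerates. On the finite-dimensional $X^-$ the choice $L=+\mathrm{id}$ is harmless, since there all norms are equivalent and $-B$ is negative definite, so $f$ strictly increases along the flow, and the mixed terms are controlled using the regularity and finite-dimensionality of $X^-$. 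Assembling these estimates yields the Lyapunov property on a full neighborhood of $\bar u$, which is the hypothesis of the abstract criterion in Theorem \ref{thm:criterionnondeg}; invoking it concludes that $\bar u$ is non-degenerate. I expect the precise bookkeeping in Lemma \ref{lem:p*-2} — quantifying how far the positivity on $X^+$ fails to persist and checking that the monotonicity surplus compensates it — to be where essentially all the work lies.
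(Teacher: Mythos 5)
Your proposal follows essentially the same route as the paper: the same Hilbert extension $\mathbb H$ and finite-dimensional splitting $X=X^-\oplus X^+$, the same hyperbolic operator $L=\mathrm{id}_{X^-}\oplus(-\mathrm{id}_{X^+})$ with the Taylor-with-integral-remainder reduction of the Lyapunov inequality, the same identification of the failure of persistence of positivity on $X^+$ for $p\le n$ as the crux, and the same resolution by deferring to the quantitative estimate of Lemma \ref{lem:p*-2} (whose mechanism --- a superquadratic $\|\nabla u\|_p^p$ surplus from the principal part absorbing the subcritical remainder, with the residual negativity controlled by the $X^-$ term --- you describe accurately) together with the criterion of Theorem \ref{thm:criterionnondeg}. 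This matches the paper's argument; no gaps.
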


Hereafter, to ease the notation, whenever no confusion can arise we use the notation 
 $``\lesssim''$ and $``\gtrsim''$  for inequalities which hold up to some multiplicative constant $c>0$ independent of any of the quantities involved.
Following \cite{Cingolani:2003} and \cite{Asselle:2024}, we see that elliptic regularity implies that $\bar u \in C^1(\overline \Omega)$. This, together with Assumption ($\Psi$), allows us to define a scalar product on $C^\infty_0(\Omega)$ by 
$$\langle v,w\rangle_{\mathbb H} := \int_\Omega \Psi''(\nabla \bar u ) \big [\nabla v,\nabla w\big] \, \mathrm d x$$
and an Hilbert space $\mathbb H$ by taking the completion of $C^\infty_0(\Omega)$ with respect to the induced norm $\|\cdot \|_{\mathbb H}$. Clearly, $\mathbb H$ is isomorphic to $W^{1,2}_0(\Omega)$ and thus we have a continuous embedding $X\hookrightarrow \mathbb H$. By construction, $\mathrm d^2 f(\bar u)$ extends (after identifying $\mathbb H^*$ with $\mathbb H$ using Riesz' representation theorem) to an operator $H: \mathbb H\to \mathbb H$ which is a compact perturbation of the identity and as such is a Fredholm operator with Fredholm index zero. Moreover, the spectrum of $H$ is real and consists of eigenvalues different from 1 (with finite multiplicity) which accumulate to 1. Applying elliptic regularity once again, we obtain an orthogonal decomposition 
$$\mathbb H = \mathbb H^- \oplus \mathbb H^+$$
where $\mathbb H^\pm$ are the positive and negative eigenspace of $H$, and $\mathbb H^-\subset X$. Consequently, this yields a splitting 
$$X = X^-\oplus X^+ = \mathbb H^- \oplus (\mathbb H^+ \cap X).$$
By construction we further have 
\begin{equation}
\mathrm d^2 f(\bar u ) [v^-,v^-] \lesssim -  \|v^-\|^2_{\mathbb H}, \quad \forall v^- \in X^-,
\label{eq:v-}
\end{equation}
and 
\begin{equation}
\mathrm d^2 f(\bar u ) [v^+,v^+] \gtrsim  \|v^+\|^2_{\mathbb H}, \quad \forall v^+ \in X^+.
\label{eq:v+}
\end{equation}
Finite dimensionality of $X^-$ implies that $\|\cdot\|_{\mathbb H}$ is equivalent to $\|\cdot \|$ on $X^-$ and hence the $C^2$-regularity of $f$ yields that \eqref{eq:v-} is still satisfied (possibly with a smaller multiplicative constant) in a neighborhood of $\bar u$. In contrast, there is a priori no reason why \eqref{eq:v+} should hold in a neighborhood of $\bar u$ (see for instance the example in the introduction). However, if it does, then one can argue as in \cite{Asselle:2024} to show that $\bar u$ is non-degenerate in the sense of Definition \ref{def:nondeg}. 
To this purpose, we set
$$p^*_{C^2}:= \frac{2p^*}{n}.$$
We readily see that $ p^*_{C^2} < p^*-2$ for every $n\geq 3$.

\begin{lem}
Assume that $f$ is of the form \eqref{eq:functionalf} with $\Psi$ and $g$ satisfying $(\Psi)$ and $(g)$. Assume further that $(g)$ is satisfied for some $0\leq q<p^*_{C^2}$. Then, there exists $\delta>0$ such that 
\begin{equation}
\mathrm d^2 f(u ) [v^+,v^+] \gtrsim \|v^+\|^2_{\mathbb H}, \quad \forall v^+ \in X^+, \ \forall \|u-\bar u\|<\delta.
\label{eq:uv+}
\end{equation}
\label{lem:p*c2}
\end{lem}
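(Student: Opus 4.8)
The plan is to argue by contradiction and compactness, reducing the uniform coercivity \eqref{eq:uv+} to a lower-semicontinuity statement for the principal part of $\mathrm d^2 f$ together with compactness of its lower order (semilinear) term. Suppose the conclusion fails; then I can find sequences $u_k \to \bar u$ in $X$ and $v_k \in X^+$ with $\|v_k\|_{\mathbb H}=1$ and $\mathrm d^2 f(u_k)[v_k,v_k]\leq 1/k$. Since $\mathbb H\cong W^{1,2}_0(\Omega)$ is Hilbert, after passing to a subsequence $v_k \rightharpoonup v$ weakly in $\mathbb H$; as $X^+\subset \mathbb H^+$ and $\mathbb H^+$ is weakly closed, the limit lies in $\mathbb H^+$, so the extension of $\mathrm d^2 f(\bar u)$ to $H$ satisfies $\mathrm d^2 f(\bar u)[v,v]\geq \lambda_+\|v\|^2_{\mathbb H}$, where $\lambda_+>0$ is the spectral gap in \eqref{eq:v+}. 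Writing $\nabla v_k=\nabla v+\nabla w_k$ with $\nabla w_k \rightharpoonup 0$ in $L^2$, the goal is to show that neither the weak limit $v$ nor the (possibly concentrating) remainder $w_k$ can be wasted in the limit of $\mathrm d^2 f(u_k)[v_k,v_k]$.

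The lower order term is where the strengthened exponent bound enters decisively. Since $u_k\to\bar u$ in $X\hookrightarrow L^{p^*}$ and, by $(g)$, the Nemytskii operator $u\mapsto \partial_s g(\cdot,u)$ is continuous from $L^{p^*}$ into $L^{p^*/q}$, it suffices to have $v_k^2\to v^2$ strongly in the dual exponent $L^{p^*/(p^*-q)}$, i.e. $v_k\to v$ in $L^{2p^*/(p^*-q)}$. A direct computation gives $2p^*/(p^*-q)<2^*=2n/(n-2)$ precisely when $q<2p^*/n=p^*_{C^2}$, so under the hypothesis the embedding $\mathbb H\cong W^{1,2}_0(\Omega)\hookrightarrow L^{2p^*/(p^*-q)}$ is subcritical and hence compact by Rellich. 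Consequently $\int_\Omega \partial_s g(x,u_k)v_k^2\,\mathrm dx \to \int_\Omega \partial_s g(x,\bar u)v^2\,\mathrm dx$; in particular the concentrating remainder $w_k$ contributes nothing to the semilinear term.

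The principal term is the main obstacle. I want to establish the lower bound
\begin{equation*}
\liminf_{k\to\infty}\int_\Omega \Psi''(\nabla u_k)[\nabla v_k,\nabla v_k]\,\mathrm dx \;\geq\; \|v\|^2_{\mathbb H} + \mu_1\kappa^{p-2}\,\liminf_{k\to\infty}\|\nabla w_k\|^2_{L^2}, \tag{$\star$}
\end{equation*}
which recovers the full $\mathbb H$-energy of the weak limit and keeps a coercive share of the concentrating part. The difficulty is that $\Psi''(\nabla u_k)$ is only continuous with $(p-2)$-growth and converges to $\Psi''(\nabla\bar u)$ merely almost everywhere (after extracting $\nabla u_k\to\nabla\bar u$ a.e.), while $\nabla v_k$ converges only weakly in $L^2$ and may concentrate on the small sets where $\nabla u_k$ departs from $\nabla\bar u$; a naive comparison with $\Psi''(\nabla\bar u)$ fails exactly there. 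My way around this is to exploit the two-sided control from $(\Psi)$. The uniform ellipticity $\Psi''(\xi)\geq \mu_1\Psi_\kappa''(\xi)\geq \mu_1\kappa^{p-2}I$ lets me split $\Psi''(\nabla u_k)=\mu_1\kappa^{p-2}I+\tilde c_k$ with $\tilde c_k\geq 0$. The quadratic part $\mu_1\kappa^{p-2}\|\nabla v_k\|^2_{L^2}$ lives in the Hilbert space $\mathbb H$ and splits cleanly, by weak convergence, as $\mu_1\kappa^{p-2}\big(\|\nabla v\|^2_{L^2}+\|\nabla w_k\|^2_{L^2}\big)+o(1)$. For the remaining nonnegative form I discard its contribution on $\{|\nabla u_k|>M^*\}$ (where it is only helpful), while on $\{|\nabla u_k|\leq M^*\}$ with $M^*>\|\nabla\bar u\|_{L^\infty}$ the truncated coefficient $\mathbf 1_{\{|\nabla u_k|\leq M^*\}}\tilde c_k$ is \emph{uniformly bounded} and still converges a.e. to $\tilde c:=\Psi''(\nabla\bar u)-\mu_1\kappa^{p-2}I$; an Egorov argument together with weak lower semicontinuity of the fixed bounded-coefficient quadratic form then yields $\liminf_k \int \mathbf 1_{\{|\nabla u_k|\leq M^*\}}\tilde c_k[\nabla v_k,\nabla v_k]\geq \int \tilde c[\nabla v,\nabla v]$. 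Adding the two pieces and using $\mu_1\kappa^{p-2}I+\tilde c=\Psi''(\nabla\bar u)$ gives $(\star)$. The crucial point is that truncating to $\{|\nabla u_k|\leq M^*\}$ renders the coefficient bounded and thereby sidesteps the integrability mismatch between the merely $W^{1,2}$ weak limit and the $p$-growth of $\Psi''$ — this is the mechanism that makes the argument survive for $p\leq n$.

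Finally I combine the pieces: passing to the limit in $\mathrm d^2 f(u_k)[v_k,v_k]\leq 1/k$ and using $(\star)$ with the convergence of the semilinear term,
\begin{align*}
0 &\geq \|v\|^2_{\mathbb H}-\int_\Omega \partial_s g(x,\bar u)v^2\,\mathrm dx+\mu_1\kappa^{p-2}\liminf_k\|\nabla w_k\|^2_{L^2} \\
  &= \mathrm d^2 f(\bar u)[v,v]+\mu_1\kappa^{p-2}\liminf_k\|\nabla w_k\|^2_{L^2} \\
  &\geq \lambda_+\|v\|^2_{\mathbb H}+\mu_1\kappa^{p-2}\liminf_k\|\nabla w_k\|^2_{L^2}.
\end{align*}
Both summands are nonnegative, so $v=0$ and $\|\nabla w_k\|_{L^2}\to 0$ along a subsequence; but $v=0$ forces $\|\nabla w_k\|^2_{L^2}=\|\nabla v_k\|^2_{L^2}\gtrsim \|v_k\|^2_{\mathbb H}=1$, a contradiction. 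This establishes \eqref{eq:uv+}. I expect the only genuinely delicate point to be the proof of $(\star)$, and within it the Egorov/truncation step taming the unbounded principal coefficient; the compactness of the semilinear term, by contrast, is a soft consequence of the strict inequality $q<p^*_{C^2}$.
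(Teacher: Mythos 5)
Your proof is correct, and its overall skeleton (contradiction, normalized sequence $v_k\in X^+$ with $\|v_k\|_{\mathbb H}=1$, weak convergence in $\mathbb H$, compactness of the semilinear term under $q<p^*_{C^2}$, lower semicontinuity of the principal part) coincides with the paper's. The exponent arithmetic $2p^*/(p^*-q)<2^*\iff q<2p^*/n$ is exactly the computation the paper performs in its Claim 1, just packaged through the Nemytskii operator rather than through a direct H\"older/dominated-convergence bound. Where you genuinely diverge is in the treatment of the principal term. The paper splits the endgame into two claims: first it rules out $v=0$ by the crude coercivity $\int\Psi''(\nabla z_n)[\nabla v_n,\nabla v_n]\gtrsim\|v_n\|_{\mathbb H}^2$, and then it invokes Theorem 4.1 of \cite{Cingolani:2003} as a black box for the weak lower semicontinuity $\int\Psi''(\nabla\bar u)[\nabla v,\nabla v]\leq\liminf_n\int\Psi''(\nabla z_n)[\nabla v_n,\nabla v_n]$, concluding $H[v,v]\leq 0$ against positivity of $H$ on $\mathbb H^+$. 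You instead prove the single quantitative estimate $(\star)$, which simultaneously recovers the $\mathbb H$-energy of the weak limit \emph{and} a coercive share $\mu_1\kappa^{p-2}\liminf_k\|\nabla w_k\|_{L^2}^2$ of the remainder, so that $v=0$ and $w_k\to 0$ drop out of one inequality and contradict the normalization. Your proof of $(\star)$ -- splitting $\Psi''(\nabla u_k)=\mu_1\kappa^{p-2}I+\tilde c_k$ with $\tilde c_k\geq 0$, discarding the nonnegative form where $|\nabla u_k|>M^*$, and using Egorov plus weak lower semicontinuity of the fixed bounded-coefficient form on the truncated set (legitimate because elliptic regularity gives $\bar u\in C^1(\overline\Omega)$, so $\tilde c=\Psi''(\nabla\bar u)-\mu_1\kappa^{p-2}I$ is bounded) -- is sound and effectively reproves the cited semicontinuity result in a self-contained way; what it buys is independence from \cite{Cingolani:2003} and a sharper conclusion, at the cost of a longer argument. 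One caveat on interpretation: the truncation device is not what "makes the argument survive for $p\leq n$" for this particular lemma -- the paper's route works equally well here -- the genuine $p\leq n$ difficulty is deferred to Lemma \ref{lem:p*-2}, where \eqref{eq:uv+} fails and is replaced by \eqref{eq:uv+2}.
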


\begin{proof}[Proof of Lemma \ref{lem:p*c2}]
We follow closely the proof of Lemma 4.2 in \cite{Cingolani:2003} and assume by contradiction that there exist sequences $\{z_n\}\subset X$ and $\{v_n\}\subset X^+$ with $\|v_n\|_{\mathbb H}=1$ such that $z_n\to \bar u$ and 
$$\liminf_{n\to +\infty} \mathrm\  d^2 f (z_n)[v_n,v_n]\leq 0.$$
Without loss of generality we can assume that $v_n\rightharpoonup v$ in $W^{1,2}_0(\Omega)$, for some $v\in \mathbb H^+$, and hence $v_n\to v$ in $L^r(\Omega)$ for every $r<2^* = \frac{2n}{n-2}$. Also, up to passing to a further subsequence if necessary, we can assume that $z_n \to \bar u$ and $v_n\to v$ pointwise almost everywhere. 

\vspace{2mm}

\textbf{Claim 1.} $\displaystyle \int_\Omega \partial_s g(x,z_n(x)) v_n^2 (x) \mathrm dx \to \int_\Omega \partial_s g(x,\bar u (x)) v^2(x)\, \mathrm d x, \quad \text{for}\ n \to +\infty.$

\noindent Condition ($g$) implies that 
\begin{align*}
\int_\Omega\big |\partial_s g(x,z_n(x)) v_n^2 (x)\big | \, \mathrm dx \lesssim \int_\Omega |z_n|^{q}|v_n|^2\, \mathrm d x + \int_\Omega |v_n|^2 \, \mathrm d x.
\end{align*}
As the second integral on the RHS clearly converges, we only focus on the first one. In order to apply Lebesgue dominated convergence theorem we just have to show that 
$$\int_\Omega  |z_n|^{q}|v_n|^2\, \mathrm d x \lesssim 1, \quad \forall n \in \N.$$ 
H\"older inequality for conjugated exponents $1<s,r<+\infty$ implies that 
$$ \int_\Omega  |z_n|^{q}|v_n|^2\, \mathrm d x \leq \| z_n\|_{qr}^{q_2}\cdot \|v_n\|_{2s}^2.$$ 
Therefore, we only have to ensure that $qr \leq p^*$ and $2s \leq 2^*$, which amounts to 
$$\left \{\begin{array}{l} r \leq \displaystyle \frac{p^*}{q}, \\ \displaystyle \frac{r}{r-1}\leq \frac{n}{n-2}.\end{array}\right.$$ 
As $r\mapsto \frac r{r-1}$ is strictly monotonically decreasing for $r>1$, all we have to check is that the second inequality is satisfied for $r = p^*/q$. A straightforward computation shows now that this is equivalent to $q\leq p^*_{C^2}$.  

\vspace{2mm}

\textbf{Claim 2.} $v \neq 0$. 

\noindent Assuming by contradiction that $v=0$ and using \eqref{eq:df2}, ($\Psi$), and Claim 1 we obtain 
\begin{align*}
\mathrm d^2f(\bar z_n)[v_n,v_n] &= \int_\Omega \Psi''(\nabla z_n )[\nabla v_n,\nabla v_n]\, \mathrm dx - \int_\Omega \partial_s g(x, z_n (x)) v_n^2(x)\, \mathrm d x\\ 
			&\gtrsim \|v_n\|_{\mathbb H}^2 - \int_\Omega \partial_s g(x, z_n (x)) v_n^2(x)\, \mathrm d x	\\
			&\gtrsim 1
\end{align*}
which is a contradiction. 

\vspace{2mm}

\textbf{Claim 3.} $H[v,v]\leq 0$. 

\noindent Clearly, Claim 3 contradicts Claim 2 because $H$ is positive definite on $\mathbb H^+$ thus finishing the proof. To prove the claim we just notice that we can apply Theorem 4.1 in \cite{Cingolani:2003} to infer that 
$$\int_\Omega \Psi'' (\nabla \bar u)[\nabla v,\nabla v]\, \mathrm d x \leq \liminf_{n\to +\infty} \int_\Omega \Psi''(\nabla z_n)[\nabla v_n,\nabla v_n]\, \mathrm d x.$$
Hence, Claim 1 implies that $\displaystyle H[v,v] \leq \liminf_{n\to +\infty} \mathrm d^2 f (z_n)[v_n,v_n] \leq 0.$
\end{proof}

\begin{rmk}
Equation \eqref{eq:uv+} can be interpreted as a sort of local uniform convexity of $f$ in the direction $X^+$. Notice that assuming that $(g)$ hold for some $0\leq q< p^*_{C^2}$ implies that the lower order part of $f$ in \eqref{eq:functionalf} is of class $C^2$ on the Hilbert extension $\mathbb H$. 
\end{rmk}

If ($g$) is satisfied for some $q\in [p^*_{C^2},p^*-2)$, then we cannot hope Lemma \eqref{lem:p*c2} to hold. Indeed, as shown in Lemma 4.4 in \cite{Cingolani:2003}, Equation \eqref{eq:uv+} holds only on $L^\infty$-balls, with the constant $\delta$ (namely, the size of the neighborhood of $\bar u$) going to zero as the radius of the $L^\infty$-balls goes to infinity. Nevertheless, the desired hyperbolic operator still exists as we now show. To ease the notation we assume hereafter that $\bar u=0$. 

We fix a neighborhood $\mathcal U$ of $\bar u =0$ such that 
\begin{equation}
\mathrm d^2 f(u) [v^-,v^-] \leq - 2c\,  \|v^-\|^2, \quad \forall v^- \in X^-, \ \forall u\in \mathcal U,
\label{eq:uv-2}
\end{equation}
for some constant $c>0$. With slight abuse of notation, $\|\cdot \|$ denotes here any (equivalent) norm on $X^-$. 

\begin{lem}
Assume that $f$ is of the form \eqref{eq:functionalf} with $\Psi$ and $g$ satisfying $(\Psi)$ and $(g)$. Then, there exist $\delta>0$ and $c_1>0$ such that 
\begin{equation}
\mathrm d^2 f(u) [u^+,u^+] \geq c_1 \, \|u^+\|^2_{\mathbb H} - c \, \|u^-\|^2,\quad \forall \|u\|<\delta,
\label{eq:uv+2}
\end{equation}
where $c>0$ is the constant in \eqref{eq:uv-2} and $u=u^-+u^+$ is the decomposition of $u$ in the splitting $X=X^-+X^+$. 
\label{lem:p*-2}
\end{lem}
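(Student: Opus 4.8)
The plan is to bound the two terms of $\mathrm d^2f(u)[u^+,u^+]$ in \eqref{eq:df2} separately, using $(\Psi)$ to extract a quantitative lower bound from the principal part and $(g)$ together with Sobolev embeddings to control the semilinear part. The decisive new ingredient is that the test direction is $u^+$ \emph{itself} rather than an arbitrary element of $X^+$, which ties its gradient to $\nabla u$. First I would record the pointwise consequences of $(\Psi)$: convexity of $\Psi-\mu_1\Psi_\kappa$ and of $\mu_2\Psi_\kappa-\Psi$ gives $\mu_1\Psi_\kappa''\leq\Psi''\leq\mu_2\Psi_\kappa''$, and since $\Psi_\kappa''(\xi)[\eta,\eta]\geq(\kappa^2+|\xi|^2)^{(p-2)/2}|\eta|^2\geq\kappa^{p-2}|\eta|^2$, the principal part obeys
\[ \int_\Omega\Psi''(\nabla u)[\nabla u^+,\nabla u^+]\,\mathrm dx\geq\mu_1\int_\Omega(\kappa^2+|\nabla u|^2)^{(p-2)/2}|\nabla u^+|^2\,\mathrm dx. \]
This simultaneously yields the uniform floor $\gtrsim\|u^+\|_{\mathbb H}^2$ and, on the region where $|\nabla u|$ is large, a $p$-area surplus $\gtrsim\int|\nabla u|^{p-2}|\nabla u^+|^2$.

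The key step is to turn this surplus into control of the \emph{strong} $W^{1,p}$-norm of $u^+$. Since $X^-$ is finite dimensional, elliptic regularity (as for $\bar u$) gives $X^-\subset C^1(\overline\Omega)$ and $\|\nabla u^-\|_{L^\infty}\lesssim\|u^-\|$; hence on the set $\{|\nabla u^+|>2C\|u^-\|\}$ one has $|\nabla u|\geq\tfrac12|\nabla u^+|$, so the weight satisfies $(\kappa^2+|\nabla u|^2)^{(p-2)/2}\gtrsim|\nabla u^+|^{p-2}$. Estimating $|\nabla u^+|^p$ on the complementary set by $(2C\|u^-\|)^{p-2}|\nabla u^+|^2$, I obtain
\[ \|\nabla u^+\|_{L^p}^p\lesssim\int_\Omega\Psi''(\nabla u)[\nabla u^+,\nabla u^+]\,\mathrm dx+\|u^-\|^{p-2}\|u^+\|_{\mathbb H}^2. \]
This is precisely the mechanism that replaces the failed persistence of \eqref{eq:v+}: the $p$-growth of $\Psi$ supplies coercivity in the strong norm exactly where $u$, and hence the nonlinearity, is large.

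It then remains to absorb the semilinear term. Writing $\int_\Omega\partial_s g(x,u)(u^+)^2\,\mathrm dx=\int_\Omega\partial_s g(x,0)(u^+)^2\,\mathrm dx+T_2$, the benign quadratic part is controlled by the spectral positivity \eqref{eq:v+} of $\mathrm d^2f(0)$ on $\mathbb H^+$, which is what subtracts $\int_\Omega\partial_s g(x,0)(u^+)^2\,\mathrm dx$. For $T_2$ I use $(g)$ and $|u|\leq|u^-|+|u^+|$: the $u^-$-contribution is $\lesssim\|u^-\|^q\|u^+\|_{\mathbb H}^2$, while the $u^+$-contribution is $\lesssim\int_\Omega|u^+|^{q+2}\,\mathrm dx\lesssim\|\nabla u^+\|_{L^p}^{q+2}$ by Sobolev, as $q+2<p^*$. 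Since $\|\nabla u^+\|_{L^p}\lesssim\|u\|<\delta$, a Young/interpolation argument (interpolating $\|u^+\|_{L^{q+2}}$ between $\|u^+\|_{L^2}\lesssim\|u^+\|_{\mathbb H}$ and $\|u^+\|_{L^{p^*}}\lesssim\|\nabla u^+\|_{L^p}$) absorbs this term into $\varepsilon\|\nabla u^+\|_{L^p}^p+(\text{small})\|u^+\|_{\mathbb H}^2$, hence by the previous display into a small multiple of the principal part plus a small multiple of $\|u^+\|_{\mathbb H}^2$. Assembling these estimates and shrinking $\delta$ then produces $\mathrm d^2f(u)[u^+,u^+]\geq c_1\|u^+\|_{\mathbb H}^2-c\|u^-\|^2$, the defect $-c\|u^-\|^2$ collecting the $u^-$-dependent errors and being written with the constant of \eqref{eq:uv-2} for later use in Theorem \ref{thm:nondeg}.

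I expect the last balancing to be the main obstacle. The dangerous term $\|\nabla u^+\|_{L^p}^{q+2}$ lives in the strong $W^{1,p}$-norm, which is genuinely \emph{not} dominated by the weak norm $\|\cdot\|_{\mathbb H}$; the entire argument hinges on the surplus of the second paragraph furnishing the missing strong $L^p$-control, and on the smallness of $\delta$ converting the super-quadratic growth into an absorbable perturbation. The delicate point is that the fixed constants — the spectral gap $c_0$ in \eqref{eq:v+} and the ratio $\mu_1/\mu_2$ — must not be overrun, which is why one should localize to the regions where $|\nabla u|$ is large or small and keep the comparison with $\mathrm d^2f(0)$ sharp, rather than rely on global crude bounds on the difference $\Psi''(\nabla u)-\Psi''(0)$.
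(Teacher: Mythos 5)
Your overall architecture is reasonable and one of your ingredients is genuinely the same as the paper's: the decisive trick is indeed that the test vector is $u^+$ (a component of the small element $u$) rather than an arbitrary unit vector of $X^+$, which lets one harvest a strong $W^{1,p}$-coercivity surplus and use it to absorb the supercritical part of the nonlinearity via Young's inequality and the smallness of $\delta$. The paper implements exactly this, but through the polarization identity $\mathrm d^2 t_\epsilon(u)[u^+,u^+]=\mathrm d^2 t_\epsilon(u)[u,u]-\mathrm d^2 t_\epsilon(u)[u^-,u^-]-2\,\mathrm d^2 t_\epsilon(u)[u^-,u^+]$ for an auxiliary functional $t_\epsilon(u)=\tfrac\epsilon p\|\nabla u\|_p^p-\tfrac{\beta}{p^*(p^*-1)}\|u\|_{p^*}^{p^*}$, which yields $\mathrm d^2 t_\epsilon(u)[u,u]\geq\epsilon''\|\nabla u\|_p^p$ near $0$; your set-splitting $\{|\nabla u^+|>2C\|u^-\|\}$ based on $\|\nabla u^-\|_{L^\infty}\lesssim\|u^-\|$ is a plausible alternative route to the same surplus. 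However, there is a genuine gap at the most delicate point, which you flag in your last paragraph but do not resolve: the comparison of the principal part at $u$ with the $\mathbb H$-norm. Condition $(\Psi)$ only gives the pointwise bounds $\mu_1\Psi_\kappa''\leq\Psi''\leq\mu_2\Psi_\kappa''$, so your floor is $\int_\Omega\Psi''(\nabla u)[\nabla u^+,\nabla u^+]\,\mathrm dx\geq\tfrac{\mu_1}{\mu_2}\|u^+\|_{\mathbb H}^2$, with $\mu_1/\mu_2<1$ in general. The spectral positivity \eqref{eq:v+} only guarantees $\|u^+\|_{\mathbb H}^2-\int_\Omega\partial_s g(x,0)(u^+)^2\,\mathrm dx\geq c_0\|u^+\|_{\mathbb H}^2$ for the (possibly tiny) spectral gap $c_0$; if $\mu_1/\mu_2<1-c_0$, the quantity $\tfrac{\mu_1}{\mu_2}\|u^+\|_{\mathbb H}^2-\int_\Omega\partial_s g(x,0)(u^+)^2\,\mathrm dx$ can be strictly negative and of order $\|u^+\|_{\mathbb H}^2$, and no $p$-th power surplus (which is $o(\|u^+\|_{\mathbb H}^2)$ for small $u$) can repair a quadratic deficit. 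Your proposed remedy, ``localize to the regions where $|\nabla u|$ is large or small,'' does not close this: smallness of $\|u\|_{W^{1,p}}$ gives no pointwise control of $\nabla u$, and the residual error on $\{|\nabla u|\gtrsim\rho\}$ is $C_\rho\int|\nabla u|^{p-2}|\nabla u^+|^2$, i.e.\ a possibly large multiple of the very surplus you are trying to spend. (Your skeleton does work for the model case $\Psi=\Psi_\kappa$, where $\Psi_\kappa''(\xi)\geq\Psi_\kappa''(0)$ pointwise and the coefficient is exactly $1$, but the lemma is stated for general $\Psi$ satisfying $(\Psi)$.)

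The paper circumvents this by never comparing $\Psi''(\nabla u)$ with $\Psi''(0)$ pointwise: it proves the local uniform convexity \eqref{eq:d2tildef} of the modified functional $\tilde f=f-t_\epsilon$ on all of $X^+$ by a compactness/contradiction argument (adapting Lemma 4.5 of \cite{Cingolani:2003}), whose engine is the weak lower semicontinuity statement $\liminf_n\int_\Omega\Psi''(\nabla z_n)[\nabla v_n,\nabla v_n]\,\mathrm dx\geq\int_\Omega\Psi''(\nabla\bar u)[\nabla v,\nabla v]\,\mathrm dx$ along $z_n\to\bar u$ in $X$ and $v_n\rightharpoonup v$ in $\mathbb H$ --- a sharp, constant-$1$ comparison that a direct pointwise estimate cannot reproduce. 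The subtraction of $t_\epsilon$ is what makes this contradiction argument go through despite $q\geq p^*_{C^2}$, since the $|u|^{p^*}$-term cancels the supercritical growth of $\partial_s g$. A second, more technical gap of the same nature occurs in your treatment of $T_2=\int_\Omega(\partial_s g(x,u)-\partial_s g(x,0))(u^+)^2\,\mathrm dx$: the bound ``$\lesssim\|u^-\|^q\|u^+\|_{\mathbb H}^2+\int|u^+|^{q+2}$'' implicitly uses $|\partial_s g(x,s)-\partial_s g(x,0)|\lesssim|s|^q$, which does not follow from $(g)$ (there is an additive constant, and no modulus of continuity of $\partial_s g(x,\cdot)$ uniform in $x$ is assumed); keeping the additive constant produces an error of order $\|u^+\|_{L^2}^2\sim\|u^+\|_{\mathbb H}^2$ with a non-small coefficient. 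To make your direct approach rigorous you would need to either restrict to $\Psi=\mu\Psi_\kappa$ and strengthen $(g)$, or replace the pointwise comparisons by the sequential compactness arguments of Lemma \ref{lem:p*c2} and \cite{Cingolani:2003}, which is essentially what the paper does.
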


\begin{rmk}
Equation \eqref{eq:uv+2} is a much weaker requirement than the local uniform convexity condition \eqref{eq:uv+}. First, for any fixed $u$ it is a condition for $\mathrm d^2f(u)$ on only one specific vector in $X^+$, namely $u^+$, and not on the whole $X^+$. Second, it allows the quantity $\mathrm d^2f(u)[u^+,u^+]$ to be negative, but the negative contribution has then to be controlled by $\mathrm d^2f(u)[u^-,u^-]$. As such, \eqref{eq:uv+2} is potentially satisfied for a large class of functionals which do not satisfy \eqref{eq:uv+}, and hence might allow us to define Morse homology for functionals which are relevant for applications but do not satisfy the local uniform convexity property. Functionals of the form \eqref{eq:functionalf} for which ($g$) is satisfied for some $p^*_{C^2}\leq q<p^*-2$ represent the first instance of this fact.  
\end{rmk}

\begin{proof}[Proof of Theorem \ref{thm:nondeg}]
We set $L:=(-\text{id}_{X^+},\text{id}_{X^-})$. Recalling that for simplicity we assumed $\bar u =0$, all we have to show is that 
$$\frac{\mathrm d}{\mathrm d t}\Big |_{t=0} f (e^{tL} u) < 0, \quad \forall \|u\|<\delta, \ u \neq 0,$$
for some $\delta>0$ small enough. Using \eqref{eq:uv-2}, \eqref{eq:uv+2}, and a Taylor expansion with integral remainder we compute 
\begin{align*}
\frac{\mathrm d}{\mathrm d t}\Big |_{t=0} f (e^{tL} u) &= \mathrm df (u)[Lu]\\
			&= \int_0^1 \mathrm d^2 f (su)[Lu,u]\, \mathrm d s\\
			&= \int_0^1 \mathrm d^2 f(su)[u^-,u^-]\, \mathrm ds - \int_0^1 \mathrm d^2 f(su)[u^+u^+]\, \mathrm ds \\
			&\leq -2c \|u^-\|^2 - c_1 \|u^+\|_{\mathbb H}^2 + c \|u^-\|^2 \\
			&\leq - c\|u^-\|^2 - c_1 \|u^+\|_{\mathbb H}^2,
\end{align*}
thus finishing the proof.
\end{proof}

Abstracting from the concrete situation of Lemma \ref{lem:p*-2}, we obtain the following criterion of independent interest to check non-degeneracy.  

\begin{thm}
Let $(Y,\|\cdot\|)$ be a Banach space, $\varphi:Y\to \R$ be a $C^2$-functional, and $y=0$ be a critical point of $\varphi$. Assume there exist a splitting $Y=Y^+\oplus Y^-$, with $\dim Y^-<+\infty$, and a constant $c>0$ such that 
\begin{equation}
\pm \mathrm d^2 \varphi (0)[z,z] \geq 2c \|z\|_{w}^2, \ \forall z \in Y^\pm,
\label{eq:splittingintro}
\end{equation}
for some weaker\footnote{Since $\dim Y^-<+\infty$, we could replace $\|\cdot\|_w$ with the Banach norm in the part of Equations  \eqref{eq:splittingintro} and \eqref{eq:splittingintro2} involving $Y^-$.} norm $\|\cdot \|_w$. 
If there exists $\delta>0$ such that 
\begin{equation}
\mathrm d^2 \varphi(y)[y^+,y^+] \geq c \|y^+\|_w^2 - c \|y^-\|_w^2, \quad \forall y=y^++y^- \ \text{s.t.} \ \|y\|<\delta,
\label{eq:splittingintro2}
\end{equation}
then $y_0$ is a non-degenerate critical point of $\varphi$ in the sense of Definition \ref{def:nondeg}. 
\label{thm:criterionnondeg}
\end{thm}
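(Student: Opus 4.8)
The plan is to mimic the proof of Theorem \ref{thm:nondeg} verbatim, since Theorem \ref{thm:criterionnondeg} is precisely the abstraction of that argument. The candidate hyperbolic operator is the bounded linear map $L:Y\to Y$ acting as $+\mathrm{id}$ on $Y^-$ and $-\mathrm{id}$ on $Y^+$, i.e.\ $L=(-\mathrm{id}_{Y^+},\mathrm{id}_{Y^-})$. This is manifestly hyperbolic (its spectrum is $\{-1,+1\}$, disjoint from $i\R$), it is bounded because the splitting $Y=Y^+\oplus Y^-$ with $\dim Y^-<+\infty$ gives a continuous projection, and the associated $L$-invariant subspaces are exactly $Y^-$ (positive real part) and $Y^+$ (negative real part), so the Morse index $\mu_-(0)=\dim Y^-$ is finite as required for the later Morse-homology construction. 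It remains to verify that $\varphi$ is a Lyapounov function for the linear flow $t\mapsto e^{tL}$ on a small neighborhood $\mathcal U=\{\|y\|<\delta\}$, which by Definition \ref{def:nondeg} is what non-degeneracy demands.

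The key computation is to show $\frac{\mathrm d}{\mathrm d t}\big|_{t=0}\varphi(e^{tL}y)<0$ for every $y\neq 0$ with $\|y\|<\delta$. First I would write this derivative as $\mathrm d\varphi(y)[Ly]$, and, since $0$ is a critical point so $\mathrm d\varphi(0)=0$, use a Taylor expansion with integral remainder to obtain
\begin{equation}
\mathrm d\varphi(y)[Ly]=\int_0^1 \mathrm d^2\varphi(sy)[Ly,y]\,\mathrm ds
=\int_0^1 \mathrm d^2\varphi(sy)[y^-,y^-]\,\mathrm ds-\int_0^1 \mathrm d^2\varphi(sy)[y^+,y^+]\,\mathrm ds,
\end{equation}
where the last equality uses bilinearity together with $Ly=y^--y^+$. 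The two hypotheses then control the two integrands on the right. For the $Y^-$ term, one needs the analogue of \eqref{eq:uv-2}: the estimate \eqref{eq:splittingintro} on the closed negative subspace, namely $\mathrm d^2\varphi(0)[z,z]\le -2c\|z\|_w^2$, persists in a neighborhood because $\dim Y^-<+\infty$ forces $\|\cdot\|_w$ and $\|\cdot\|$ to be equivalent on $Y^-$, so by $C^2$-regularity of $\varphi$ we have $\mathrm d^2\varphi(sy)[y^-,y^-]\le -2c\|y^-\|^2$ for $\|y\|$ small (after possibly shrinking $\delta$ and the constant). For the $Y^+$ term, hypothesis \eqref{eq:splittingintro2} gives directly $\mathrm d^2\varphi(sy)[y^+,y^+]\ge c\|y^+\|_w^2-c\|y^-\|_w^2$ for $\|sy\|\le\|y\|<\delta$.

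Substituting these two bounds into the integral expression yields
\begin{equation}
\mathrm d\varphi(y)[Ly]\le -2c\|y^-\|^2-\Big(c\|y^+\|_w^2-c\|y^-\|_w^2\Big)
\le -c\|y^-\|^2-c\|y^+\|_w^2<0
\end{equation}
whenever $y\neq 0$, exactly as in the proof of Theorem \ref{thm:nondeg}; the negative contribution $c\|y^-\|_w^2$ coming from the weakening in \eqref{eq:splittingintro2} is absorbed by one of the two units of negativity in the $Y^-$ estimate (this is the whole point of the factor $2c$ in \eqref{eq:splittingintro}). Thus $\varphi$ strictly decreases along non-constant $L$-flow lines through $\mathcal U$, so it is a Lyapounov function and $0$ is non-degenerate.

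The only genuine subtlety — and where I expect to spend the most care rather than the most difficulty — is the bookkeeping of the two norms $\|\cdot\|_w$ and $\|\cdot\|$ together with the precise constants. One must check that the persistence of the $Y^-$ estimate from \eqref{eq:splittingintro} genuinely produces a coefficient at least $2c$ (in whichever norm) so that after subtracting the $c\|y^-\|_w^2$ defect from \eqref{eq:splittingintro2} a strictly negative definite expression survives; the footnote remark that on the finite-dimensional $Y^-$ one may freely replace $\|\cdot\|_w$ by the Banach norm is what makes this harmless. A final remark is that, because $\dim Y^-<+\infty$, the stable/unstable-manifold and local-closedness issues flagged in Footnote 5 do not arise, so non-degeneracy in the sense of Definition \ref{def:nondeg} is all that is needed and the proof is complete once the displayed inequality is established.
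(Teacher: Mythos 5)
Your proposal is correct and follows exactly the route the paper intends: the theorem is the abstraction of the proof of Theorem \ref{thm:nondeg}, and your argument (take $L=(-\mathrm{id}_{Y^+},\mathrm{id}_{Y^-})$, Taylor-expand $\mathrm d\varphi(y)[Ly]$ with integral remainder, use the persistence of the negative estimate on the finite-dimensional $Y^-$ and hypothesis \eqref{eq:splittingintro2} on $Y^+$, and absorb the $c\|y^-\|_w^2$ defect into the $2c$ margin) reproduces that proof verbatim. The only point needing a word of care is the one you already flag: the persistence step and the final absorption should be carried out consistently in one norm on $Y^-$ (harmless by equivalence of norms there), so the proof is complete.
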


\begin{rmk}
The criterion given in Theorem \ref{thm:criterionnondeg} can be generalized to the case in which both $Y^+$ and $Y^-$ are infinite dimensional by requiring that \eqref{eq:splittingintro2} holds also swapping the roles of $y^+$ a $y^-$ and changing $\geq$ to $\leq$. This allows us to check non-degeneracy in many cases of interests where the Morse index is infinite\footnote{This is of great importance because the classical approach to Morse theory based on deformation of sublevel sets and cell attachments fails when the Morse index is infinite, being the infinite dimensional sphere contractible.}, for instance 
for solutions of systems of quasi-linear elliptic pde's of the form 
\begin{equation}
\label{eq:bvpsystems}
\left \{ \begin{array}{r} - \text{div}\, \big (\nabla \Psi (\nabla u)\big ) = \partial_u G(x, u,v) \quad \quad  \text{in}\ \Omega, \\ \text{div}\, \big (\nabla \Psi (\nabla v)\big )= \partial_v G(x,u,v) \quad \quad  \text{in}\ \Omega, \\ u=v=0 \qquad \qquad  \qquad \ \ \ \text{on}\ \partial \Omega,\end{array}\right .
\end{equation}
or for perturbed $\alpha$-Dirac-harmonic maps, see \cite{Jost:2021} and references therein. We refrain to do it here because the construction of Morse homology is much more delicate in case of infinite Morse indices and subtle 
technical aspects (such as the pre-compactness of the intersection between stable and unstable manifolds of pairs of critical points) still needs to be addressed in a Banach setting, and rather leave it  for future research. 
\end{rmk}

\begin{proof}[Proof of Lemma \ref{lem:p*-2}]
By Assumption ($g$), there exist constants $\alpha,\beta>0$ such that
$$|\partial_s g(x,s)| \leq \alpha + \beta |s|^{p^*-2}, \quad \forall x \in \Omega, \ \forall s \in \R.$$
For $\epsilon>0$ to be fixed later we set 
$$t_\epsilon(u) := \frac \epsilon p \int_\Omega |\nabla u|^p \, \mathrm dx - \frac{\beta}{p^*(p^*-1)} \int_\Omega |u|^{p^*}\, \mathrm d x$$
and notice that there exist $\delta=\delta(\epsilon)>0$ and $\epsilon'=\epsilon'(\epsilon)>0$ small enough such that 
$$t_\epsilon (u) \geq \epsilon' \int_\Omega |\nabla u|^p \, \mathrm d x = \epsilon' \, \|\nabla u\|_p^p, \quad \forall \|u\|<\delta,$$
hence for a possibly smaller constant $\epsilon''=\epsilon''(\epsilon)>0$
\begin{equation}
\mathrm d^2 t_\epsilon (u)[u,u] \geq \epsilon''\, \|\nabla u \|_p^p, \quad \forall \|u\|<\delta.
\label{eq:convexitytepsilon}
\end{equation}
We finally set $\tilde f:X\to \R$ by 
$$\tilde f(u) := f(u)-t_\epsilon (u), \quad  \forall u \in X.$$
Adapting the argument in \cite[Lemma 4.5]{Cingolani:2003} yields for a possibly smaller $\delta>0$ (the details are left to the reader) 
\begin{equation}
\mathrm d^2\tilde f(u)[v^+,v^+] \geq 2 c_1 \,  \|v^+\|_{\mathbb H}^2, \ \forall v^+ \in X^+, \ \forall \|u\|<\delta,
\label{eq:d2tildef}
\end{equation}
for some constant $c_1>0$, that is, $\tilde f$ is locally uniformly convex in the $X^+$-direction. 

We now compute using \eqref{eq:convexitytepsilon} and \eqref{eq:d2tildef}
\begin{align}
\mathrm d^2f(u)& [u^+,u^+] \nonumber  \\ 
			& = \mathrm d^2 \tilde f(u)[u^+,u^+] + \mathrm d^2 t_\epsilon (u)[u^+,u^+] \nonumber \\ 
			&= \mathrm d^2 \tilde f(u)[u^+,u^+] + \mathrm d^2 t_\epsilon (u)[u,u]  - \mathrm d^2 t_\epsilon (u)[u^-,u^-] - 2 \mathrm d^2 t_\epsilon (u)[u^-,u^+]  \label{eq:df2estimate} \\
			& \geq 2 c_1 \,  \|v^+\|_{\mathbb H}^2 +  \epsilon''\, \|\nabla u \|_p^p - \underbrace{\mathrm d^2 t_\epsilon (u)[u^-,u^-]}_{=:(\ast)} - 2 \underbrace{\mathrm d^2 t_\epsilon (u)[u^-,u^+]}_{=:(\ast \ast)}, \nonumber
\end{align}
and estimate $(\ast)$ and $(\ast \ast )$ separately. For $(\ast)$ we have 
\begin{align}
|\mathrm d^2 t_\epsilon (u)& [u^-,u^-]| \nonumber \\ &\leq \epsilon \underbrace{\int_\Omega |\nabla u|^{p-2} |\nabla u^-|^2 \, \mathrm d x}_{=:(\ast_1)} + \epsilon(p-2)  \underbrace{\int_\Omega |\nabla u|^{p-4} |\langle \nabla u, \nabla u^-\rangle |^2 \, \mathrm d x}_{=:(\ast_2)} + \beta \underbrace{\int_\Omega |u|^{p^*-2}|u^-|^2\, \mathrm d x}_{=:(\ast_3)}.
\label{eq:d2teu-}
\end{align}
For $(\ast_1)$ we obtain applying the elementary inequality 
$$(a+b)^{p-2} \leq c_2 (a^{p-2}+ b^{p-2}), \quad \forall a,b >0,$$
for some constant $c_2>0$ depending only on $p>2$, H\"older's inequality with conjugated exponents $r=\frac p{p-2}$ and $s = \frac p2$, and subsequently Young's inequality with $\frac 1 \nu + \frac 1 \mu =1$ for some $\nu$ such that $\nu(p-2) > p$: 
\begin{align*}
(\ast_1) &\leq c_2 \Big (\int_\Omega |\nabla u^+|^{p-2} |\nabla u^-|^2 \, \mathrm d x + \int_\Omega |\nabla u^-|^p\, \mathrm d x	\Big )	\\
 		& \leq c_2 \Big ( \|\nabla u^+\|_p^{p-2} \|\nabla u^-\|_p^2 + \|\nabla u^-\|_p^p\Big )\\
		& \leq c_2 \Big (\frac 1 \nu \|\nabla u^+\|_p^{\nu(p-2)} + \frac 1 \mu \|\nabla u^-\|_p^{2\mu} + \|\nabla u^-\|_p^p\Big )\\ 
		& \leq c_2 \Big (\|\nabla u^+\|_p^{\nu(p-2)} + \|\nabla u^-\|_p^{2\mu} + \|\nabla u^-\|_p^p\Big ).
\end{align*}
$(\ast_2)$ can be estimated in an analogous way and leads to an estimate identical to that for $(\ast_1)$. The details are left to the reader. For $(\ast_3)$ we compute in a similar fashion using the elementary inequality 
$$(a+b)^{p^*-2} \leq c_3 (a^{p^*-2}+ b^{p^*-2}), \quad \forall a,b >0,$$
for some constant $c_3>0$ depending only on $p>2$, H\"older's inequality with conjugated exponents $r=\frac{p^*}{p^*-2}$ and $s = \frac{p^*}2$, and subsequently Young's inequality with $\frac 1 i + \frac 1 j =1$ for some $i$ such that $i(p^*-2) > p$: 
\begin{align*}
(\ast_3) &\leq c_3 \Big (\int_\Omega |u^+|^{p^*-2} |u^-|^2 \, \mathrm d x + \int_\Omega |u^-|^{p^*}\, \mathrm d x	\Big )	\\
 		& \leq c_3 \Big ( \|u^+\|_{p^*}^{p^*-2} \|u^-\|_{p^*}^2 + \|u^-\|_{p^*}^{p^*}\Big )\\
		& \leq c_3 \Big (\|u^+\|_{p*}^{i(p^*-2)} + \|u^-\|_{p^*}^{2j} + \|u^-\|_{p^*}^{p^*}\Big ).
\end{align*}
Up to shrinking $\delta>0$ further if necessary, putting all estimates above in \eqref{eq:d2teu-} and using the fact that all norms are equivalent on $X^-$ we obtain for some constant $c_4>0$:
\begin{align}
|\mathrm d^2 t_\epsilon (u)& [u^-,u^-]| \nonumber \\ 
		&\leq \epsilon c_2 (p -1 ) \Big (\|\nabla u^+\|_p^{\nu(p-2)} + \|\nabla u^-\|_p^{2\mu} + \|\nabla u^-\|_p^p\Big ) + c_3 \Big (\|u^+\|_{p*}^{i(p^*-2)} + \|u^-\|_{p^*}^{2j} + \|u^-\|_{p^*}^{p^*}\Big )\nonumber \\
		& \leq c_4 \Big (\epsilon \|\nabla u^+\|_p^{\nu(p-2)} + \epsilon \|u^-\|^{\mu_1} + \|u^+\|_{p^*}^{i(p^*-2)} + \|u^-\|^{j_1}\Big ) \label{eq:d2teu-2}\\
		&\leq c_4 \Big (\epsilon \|\nabla u^+\|_p^{\nu(p-2)} + \epsilon \|u^-\|^{\mu_1} + \|\nabla u^+\|_{p}^{i(p^*-2)} + \|u^-\|^{j_1}\Big ),  \quad \forall \|u\|<\delta, \nonumber
\end{align}
where $\mu_1 := \min \{2\mu,p\}>2$ and $j_1 := \min\{2j,p^*\}>2$, and in the last inequality we have used the Sobolev embedding theorem.

We now move on to $(\ast \ast)$: 
\begin{align}
|\mathrm d^2 t_\epsilon (u) [u^+,u^-]| &\leq \epsilon \underbrace{\int_\Omega |\nabla u|^{p-2} |\langle \nabla u^+,\nabla u^-\rangle|^2 \, \mathrm d x}_{=:(\ast \ast_1)} \nonumber \\ 
			 & + \epsilon(p-2)  \underbrace{\int_\Omega |\nabla u|^{p-4} |\langle \nabla u, \nabla u^+\rangle | |\langle \nabla u, \nabla u^-\rangle | \, \mathrm d x}_{=:(\ast \ast_2)} + \beta \underbrace{\int_\Omega |u|^{p^*-2}|u^+| |u^-|\, \mathrm d x}_{=:(\ast \ast_3)}.
\label{eq:d2teu+}
\end{align}
As above $(\ast \ast_1)$ and $(\ast \ast_2)$ can be estimated in a similar way. Arguing as for $(\ast_1)$, we compute for $(\ast \ast_1)$ for some constant $c_5>0$ depending only on $p>2$: 
\begin{align}
(\ast \ast_1) &\leq c_5 \Big ( \int_\Omega |\nabla u^+|^{p-1} |\nabla u^-| \, \mathrm d x + \int_\Omega |\nabla u^-|^{p-1} |\nabla u^+|\, \mathrm d x \nonumber \\ 	
		 &\leq c_5 \Big (\|\nabla u^+\|_p^{p-1}\|\nabla u^-\|_p + \|\nabla u^-\|_{2p-2}^{p-1}\|\nabla u^+\|_2\Big )\\
		 &\leq c_5 \Big (\|\nabla u^+\|_p^{2p-2} + \|\nabla u^-\|_p^2 + \|\nabla u^-\|_{2p-2}^{2p-2} + \|\nabla u^+\|_2^2\Big ) \nonumber \\
		 &\leq c_5 \Big (\|\nabla u^+\|_p^{2p-2} + \|u^-\|^2 + \|u^-\|^{2p-2} + \|u^+\|_{\mathbb H}^2\Big ), \nonumber 
\end{align}
where in the second inequality we have used both H\"older's inequality with conjugated exponents $r= \frac p{p-1}$ and $s=p$ and the Cauchy-Schwartz inequality, and in the last inequality we have used the elementary inequality $ab\leq a^2+b^2$ for all $a,b>0$ and the fact that $\|\cdot\|_{\mathbb H}$ is equivalent to the $W^{1,2}$-norm. Arguing as for $(\ast_3)$, we compute for $(\ast \ast_3)$ for some constant $c_6>0$ depending only on $p>2$: 
\begin{align*}
(\ast \ast_3) &\leq c_6  \Big (\int_\Omega |u^+|^{p^*-1} |u^-| \, \mathrm d x + \int_\Omega |u^+| |u^-|^{p^*-1}\, \mathrm d x	\Big )	\\
 		& \leq c_6 \Big ( \|u^+\|_{p^*}^{p^*-1} \|u^-\|_{p^*} + \|u^+\|_2 \|u^-\|_{2p^*-1}^{p^*-1}\Big )\\
		& \leq c_6 \Big (\|u^+\|_{p^*}^{\gamma(p^*-1)} + \|u^-\|_{p^*}^\rho + \|u^-\|_{2p^*-1}^{\gamma(p^*-1)} + \|u^+\|_2^\rho\Big )\\
		&\leq c_6 \Big ( \|\nabla u^+\|_p^{\gamma(p^*-1)} + \|u^-\|^\rho + \|u^-\|^{\gamma(p^*-1)} + \|u^+\|_{\mathbb H}^\rho\Big ),
\end{align*}
where in the second inequality we have used both H\"older's inequality with conjugated exponents $r=\frac{p^*}{p^*-1}$ and $s=p^*$ and the Cauchy-Schwartz inequality, in the third inequality we have used Young's inequality with $\rho>2$ so close to $2$ that $\gamma(p^*-1)>p$, and in the last inequality we have used the fact that all norms on $X^-$ are equivalent, the Sobolev embedding theorem, and Poincare's inequality. Putting all estimates together in \eqref{eq:d2teu+} yields for some constant $c_7>0$ after possibly shrinking $\delta>0$ further: 
\begin{align}
|\mathrm d^2 t_\epsilon (u)[u^+,u^-]| &\leq  \epsilon c_5 (p -1 ) \Big (\|\nabla u^+\|_p^{2p-2} + \|u^-\|^2 + \|u^-\|^{2p-2} + \|u^+\|_{\mathbb H}^2 \Big ) \nonumber \\ 
& + c_6 \Big (\|\nabla u^+\|_{p}^{\gamma (p^*-1)} + \|u^-\|^\rho  + \|u^-\|^{\gamma(p^*-1)} + \|u^+\|_{\mathbb H}^\rho\Big )\label{eq:d2teu+2} \\
		& \leq c_7 \Big (\epsilon \|\nabla u^+\|_p^{2p-2} +  \epsilon \|u^+\|_{\mathbb H}^2 + \epsilon \|u^-\|^{2} + \|\nabla u^+\|_{p}^{\gamma (p^*-1)} + \|u^-\|^{j_2} + \|u^+\|_{\mathbb H}^\rho\Big ) \nonumber
\end{align}
where $j_2:=\min \{\rho, \gamma(p^*-1)\}>2$. Plugging \eqref{eq:d2teu-2} and \eqref{eq:d2teu+2} into \eqref{eq:df2estimate}, using the elementary inequality 
$$(a+b)^p \leq 2^p (a^p + b^p), \quad \forall a,b>0,$$
and again the fact that all norms on $X^-$ are equivalent, we get 
\begin{align*}
\mathrm d^2 f (u) & [u^+,u^+] \\ 
	& \geq 2 c_1 \,  \|v^+\|_{\mathbb H}^2 +  \epsilon''\, \|\nabla u \|_p^p - \mathrm d^2 t_\epsilon (u)[u^-,u^-]- 2 \mathrm d^2 t_\epsilon (u)[u^-,u^+]\\
	& \geq 2 c_1 \,  \|v^+\|_{\mathbb H}^2 +  \epsilon''\, \|\nabla u \|_p^p - c_4 \Big (\epsilon \|\nabla u^+\|_p^{\nu(p-2)} + \epsilon \|u^-\|^{\mu_1} + \|\nabla u^+\|_{p}^{i(p^*-2)} + \|u^-\|^{j_1}\Big )\\
	& \ \ \ - c_7 \Big (\epsilon \|\nabla u^+\|_p^{2p-2} +  \epsilon \|u^+\|_{\mathbb H}^2 + \epsilon \|u^-\|^{2} + \|\nabla u^+\|_{p}^{\gamma (p^*-1)} + \|u^-\|^{j_2} + \|u^+\|_{\mathbb H}^\rho\Big ) \\
	&\geq  2 c_1 \,  \|v^+\|_{\mathbb H}^2 - c_7\epsilon \|u^+\|_{\mathbb H}^2 - c_7\epsilon \|u^-\|^2 +  \epsilon''\, \|\nabla u \|_p^p \\ 
	&\ \ \ - 2^p \big ( c_4\epsilon \|\nabla u\|_p^{\nu(p-2)} + c_4 \|\nabla u\|_p^{i(p^*-2)} + c_7 \epsilon \|\nabla u\|_p^{2p-2} + c_7 \|\nabla u\|_p^{\gamma(p^*-1)}\big ) \\
	& \ \ \ - 2^p \big ( c_4\epsilon \|u^-\|^{\nu(p-2)} + c_4 \|u^-\|_p^{i(p^*-2)} + c_7 \epsilon \|u^-\|^{2p-2} + c_7 \|u^-\|^{\gamma(p^*-1)}\big )\\
	& \ \ \ - c_4 \epsilon \|u^-\|^{\mu_1} - c_4 \|u^-\|^{j_1}  - c_7\|u^-\|^{j_2} - c_7 \|u^+\|_{\mathbb H}^\rho. 
\end{align*}
Choosing now $\epsilon< \min \{\frac{c_1}{2 c_7},\frac{c}{2c_7}\}$, where $c>0$ is given by \eqref{eq:uv-2}, yields 
\begin{align*}
\mathrm d^2 f (u) & [u^+,u^+] \\ 
	& \geq \frac 32 c_1 \|u^+\|_{\mathbb H}^2  - \frac c2 \|u^-\|^2 - c_7\|u^+\|_{\mathbb H}^\rho  \\
	& \ \ \ + \underbrace{\epsilon'' \|\nabla u\|_p^ p - 2^p \big ( c_4\epsilon \|\nabla u\|_p^{\nu(p-2)} + c_4 \|\nabla u\|_p^{i(p^*-2)} + c_7 \epsilon \|\nabla u\|_p^{2p-2} + c_7 \|\nabla u\|_p^{\gamma(p^*-1)}\big )}_{=:(1)} \\
	& \ \ \ - \underbrace{2^p \big ( c_4\epsilon \|u^-\|^{\nu(p-2)} + c_4 \|u^-\|_p^{i(p^*-2)} + c_7 \epsilon \|u^-\|^{2p-2} + c_7 \|u^-\|^{\gamma(p^*-1)}\big )}_{=:(2)} \\ 
	& \ \ \ - \underbrace{\big (c_4 \epsilon \|u^-\|^{\mu_1} + c_4 \|u^-\|^{j_1}  + c_7\|u^-\|^{j_2}\big )}_{=:(3)},
	\end{align*}
	and since by construction 
	\begin{align*}
	&\rho > 2,\\ 
	& \min \{ \nu(p-2),i(p^*-2), 2p-2, \gamma(p^*-1)\} > p, \\
	& \min \{\nu (p-2),i(p^*-2), 2p-2, \gamma(p^*-1), \mu_1,\j_1,j_2\} >2,
	\end{align*}
	up to shrinking $\delta>0$ further if necessary we get 
	$$c_7\|u^+\|_{\mathbb H}^\rho \leq \frac{c_1}2 \|u^+\|_{\mathbb H}^2, \quad (1)\geq 0, \quad (2)\leq \frac c4 \|u^-\|^2, \quad (3) \leq \frac c4 \|u^-\|^2,$$
	hence finally 
	$$\mathrm d^2 f (u) [u^+,u^+] \geq c_1 \|u^+\|_{\mathbb H}^2 - c \|u^-\|^2$$
	as desired. 
\end{proof}


\section{The Cerami condition}
\label{sec:Ceramicondition}

In this final section we discuss the Cerami condition for the functionals $f:X\to \R$ as in \eqref{eq:functionalf}. Most of the results we present here are already known and can be found in the literature. 
However, we still find convenient to collect them here. 
It is well-known (see \cite{Cingolani:2005,Cingolani:2018,Liu:2010} and references therein) that the key step 
is to show that Cerami sequences are bounded, as then pre-compactness follows adapting standard arguments, see \cite{Benci}. As it turns out, boundedness of Cerami 
(and more generally, Palais-Smale) sequences strongly depends on the growth properties of the function $g$. For convenience we will split Condition $(g)$ into three subconditions:

\vspace{2mm}

($g-$sub) Condition ($g$) is satisfied for some $q<p-2$. 

\vspace{2mm}

($g-$lin) $\partial_s g(\cdot ,s)$ grows exactly as $|s|^{p-2}$.

\vspace{2mm}

($g-$sup) Condition ($g$) is satisfied for some $p-2<q<p^*-2,$ and $\displaystyle \lim_{|s|\to +\infty} |s|^{2-p}|\partial_s g(\cdot,s)|=+\infty$. 

Such subconditions are usually referred to as \textit{sublinear, linear, and superlinear growth at infinity} respectively. In case of sublinear growth at infinity, one 
can show that $f$ is bounded from below and satisfies the Palais-Smale condition. The proof is identical to \cite[Lemma 2.7]{Asselle:2024} and will be omitted. 

\begin{lem}
Assume $f:X\to \R$ as in \eqref{eq:functionalf} satisfies $(\Psi)$ and $(g\mathrm{-sub})$. Then $f$ is bounded from below and satisfies the Palais-Smale condition. 
\end{lem}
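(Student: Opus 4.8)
The plan is to prove coercivity of $f$ first, since this simultaneously yields boundedness from below and, later, boundedness of Palais-Smale sequences; the compactness is then recovered by a standard monotonicity $(S_+)$-argument exploiting the strict convexity in $(\Psi)$.

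\textbf{Coercivity and boundedness from below.} I would begin by extracting the lower control on $\Psi$ contained in $(\Psi)$. The function $h:=\Psi-\mu_1\Psi_\kappa$ is convex with $h(0)=0$ and $\nabla h(0)=0$, hence attains its global minimum at the origin, so $\Psi\geq\mu_1\Psi_\kappa$. Since $(\kappa^2+|\xi|^2)^{p/2}\geq|\xi|^p$, this gives $\Psi(\xi)\geq\frac{\mu_1}{p}|\xi|^p-C$ and therefore $\int_\Omega\Psi(\nabla u)\,\mathrm dx\geq\frac{\mu_1}{p}\|\nabla u\|_p^p-C$. Integrating $(g\mathrm{-sub})$ twice shows that $G$ has subcritical growth, $|G(x,s)|\lesssim a(x)+|s|^{q+2}$ with $a\in L^1(\Omega)$ (a constant in the autonomous case) and, decisively, $q+2<p$. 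The embedding $W^{1,p}_0(\Omega)\hookrightarrow L^{q+2}(\Omega)$ then gives $\int_\Omega G(x,u)\,\mathrm dx\lesssim 1+\|\nabla u\|_p^{q+2}$, and absorbing this subcritical term by Young's inequality (this is exactly where $q+2<p$ is used) produces
$$f(u)\geq\tfrac{\mu_1}{2p}\|\nabla u\|_p^p-C.$$
Together with Poincar\'e's inequality this shows that $f$ is bounded from below and coercive on $X$.

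\textbf{Boundedness of Palais-Smale sequences.} Let $(u_n)$ satisfy $f(u_n)\to c$ and $\mathrm df(u_n)\to 0$ in $X^*$. Coercivity applied to $f(u_n)\leq c+1$ immediately forces $(u_n)$ to be bounded in $X$, so after passing to a subsequence $u_n\rightharpoonup u$ weakly in $X=W^{1,p}_0(\Omega)$. By Rellich-Kondrachov, $u_n\to u$ strongly in $L^r(\Omega)$ for every $r<p^*$ and pointwise a.e.; in particular $u_n\to u$ in $L^{q+2}(\Omega)$, using $q+2<p\leq p^*$.

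\textbf{Pre-compactness via monotonicity.} This is the heart of the matter. Since $(u_n-u)$ is bounded, $\mathrm df(u_n)[u_n-u]\to 0$, i.e.
$$\int_\Omega\nabla\Psi(\nabla u_n)\cdot\nabla(u_n-u)\,\mathrm dx-\int_\Omega g(x,u_n)(u_n-u)\,\mathrm dx\to 0.$$
By $(g)$, $g(\cdot,u_n)$ is bounded in $L^{(q+2)'}(\Omega)$ while $u_n-u\to 0$ in $L^{q+2}(\Omega)$, so H\"older's inequality kills the second integral. The growth $|\nabla\Psi(\xi)|\lesssim 1+|\xi|^{p-1}$ built into $(\Psi)$ gives $\nabla\Psi(\nabla u)\in L^{p'}(\Omega)$, whence $\int_\Omega\nabla\Psi(\nabla u)\cdot\nabla(u_n-u)\,\mathrm dx\to 0$ by weak convergence $\nabla u_n\rightharpoonup\nabla u$ in $L^p$. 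Subtracting,
$$\int_\Omega\big[\nabla\Psi(\nabla u_n)-\nabla\Psi(\nabla u)\big]\cdot\nabla(u_n-u)\,\mathrm dx\to 0.$$
The strict convexity encoded in $(\Psi)$ makes the operator $u\mapsto-\mathrm{div}\,(\nabla\Psi(\nabla u))$ satisfy the $(S_+)$ property: the monotone integrand above is nonnegative and its vanishing forces $\nabla u_n\to\nabla u$ in $L^p(\Omega)$, i.e. $u_n\to u$ in $X$. I expect this last step to be the main obstacle, since it requires upgrading the qualitative convexity of $\Psi-\mu_1\Psi_\kappa$ to the quantitative monotonicity estimate $[\nabla\Psi(\xi)-\nabla\Psi(\eta)]\cdot(\xi-\eta)\gtrsim|\xi-\eta|^p$ for the regularized $p$-structure; once this inequality is in place the strong convergence, and hence the Palais-Smale condition, follows at once.
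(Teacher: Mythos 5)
Your proof is correct and follows essentially the same route as the paper's (which is omitted there by reference to \cite[Lemma 2.7]{Asselle:2024}): coercivity from the $p$-growth of $\Psi$ and the sub-$p$ growth of $G$, hence boundedness from below and boundedness of Palais--Smale sequences, and then strong convergence via the $(S_+)$/monotonicity property of the quasilinear operator. The one step you flag as a possible obstacle is in fact immediate from $(\Psi)$: convexity of $\Psi-\mu_1\Psi_\kappa$ gives $\Psi''(\xi)\geq\mu_1\Psi_\kappa''(\xi)\geq\mu_1(\kappa^2+|\xi|^2)^{(p-2)/2}\,\mathrm{Id}\geq\mu_1|\xi|^{p-2}\,\mathrm{Id}$ pointwise, and the standard estimate $\int_0^1|\eta+t(\xi-\eta)|^{p-2}\,\mathrm dt\gtrsim|\xi-\eta|^{p-2}$ for $p>2$ then yields $\big[\nabla\Psi(\xi)-\nabla\Psi(\eta)\big]\cdot(\xi-\eta)\gtrsim|\xi-\eta|^p$.
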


We now consider the  case of linear growth at infinity. Here, additional assumptions are needed in order to obtain the Cerami condition. Following \cite{Cingolani:2018} we assume that almost everywhere in $\Omega$
\begin{equation}
\lim_{|s|\to +\infty} \frac{g(\cdot,s)}{|s|^{p-2}s} = \lambda,
\label{eq:growthadditional1}
\end{equation}
for some $\lambda \in \R$. For simplicity we also assume $\Psi=\Psi_\kappa$ for some $\kappa>0$, but the argument should carry over with minor modifications to any general $\Psi$ satisfying ($\Psi$). In what follows $\Delta_p u:= \text{div} (|\nabla u|^{p-2} \nabla u)$ denotes the $p$-Laplace operator.  

\begin{lem}
Assume $f:X\to \R$ as in \eqref{eq:functionalf}, with $\Psi=\Psi_\kappa$ for some $\kappa >0$, satisfies and $(g\mathrm{-lin})$. Assume further that 
 \eqref{eq:growthadditional1} is satisfied for some $\lambda \notin \sigma(-\Delta_p)$. Then $f$ satisfies the Cerami condition. 
\end{lem}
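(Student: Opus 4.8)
The plan is to follow the scheme anticipated in the text: since pre-compactness of Cerami sequences follows from their boundedness by the standard $(S_+)$ argument for the operator $u\mapsto-\mathrm{div}\,(\nabla\Psi_\kappa(\nabla u))$ (test $\mathrm df(u_n)=o(1)$ with $u_n-u$; the term $\int_\Omega g(x,u_n)(u_n-u)\,\mathrm dx\to 0$ because $|g(x,s)|\lesssim 1+|s|^{p-1}$ makes $g(x,u_n)$ bounded in $L^{p'}(\Omega)$ while $u_n-u\to 0$ in $L^p(\Omega)$, so that $\int_\Omega\nabla\Psi_\kappa(\nabla u_n)\cdot\nabla(u_n-u)\,\mathrm dx\to0$ and the $(S_+)$ property gives $u_n\to u$ in $X$, exactly as in \cite{Benci,Cingolani:2018}), I would concentrate entirely on boundedness. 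First I would argue by contradiction: let $(u_n)\subset X$ be a Cerami sequence at some level $c$, so $\|\mathrm df(u_n)\|_{X^*}\to0$ and $\|\mathrm df(u_n)\|_{X^*}\|u_n\|\to0$, and suppose $t_n:=\|u_n\|\to+\infty$. Setting $w_n:=u_n/t_n$, so that $\|w_n\|=1$, I would pass to a subsequence with $w_n\rightharpoonup w$ in $W^{1,p}_0(\Omega)$, $w_n\to w$ in $L^p(\Omega)$ and a.e., using the compactness of $W^{1,p}_0(\Omega)\hookrightarrow L^p(\Omega)$.

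The next step is to divide the weak equation $\mathrm df(u_n)=o(1)$ in $X^*$ by $t_n^{p-1}$. Writing $\nabla\Psi_\kappa(\nabla u_n)=(\kappa^2+|\nabla u_n|^2)^{(p-2)/2}\nabla u_n$ and rescaling, the principal part becomes $\tilde a_n(\nabla w_n)$ with $\tilde a_n(\xi):=(\kappa^2/t_n^2+|\xi|^2)^{(p-2)/2}\xi$. Condition $(g\mathrm{-lin})$ together with \eqref{eq:growthadditional1} forces $g(x,u_n)/t_n^{p-1}\to\lambda|w|^{p-2}w$ a.e. (on $\{w\neq0\}$ via the asymptotics, and to $0$ on $\{w=0\}$ via $|g(x,u_n)|/t_n^{p-1}\lesssim t_n^{-(p-1)}+|w_n|^{p-1}$); being bounded in $L^{p'}(\Omega)$, this sequence converges weakly there to $\lambda|w|^{p-2}w$. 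Likewise $\tilde a_n(\nabla w_n)$ is bounded in $L^{p'}(\Omega)$, so $\tilde a_n(\nabla w_n)\rightharpoonup b$ for some $b$, and passing to the limit against fixed test functions yields $\int_\Omega b\cdot\nabla v\,\mathrm dx=\lambda\int_\Omega|w|^{p-2}wv\,\mathrm dx$ for all $v\in X$.

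The heart of the matter, and the step I expect to be the main obstacle, is to prove $\nabla w_n\to\nabla w$ strongly in $L^p(\Omega)$, which at once identifies $b=|\nabla w|^{p-2}\nabla w$ and forces $\|w\|=\lim\|w_n\|=1$, hence $w\neq0$. To get there I would test the rescaled equation with $w_n$, obtaining $\int_\Omega\tilde a_n(\nabla w_n)\cdot\nabla w_n\,\mathrm dx\to\lambda\int_\Omega|w|^p\,\mathrm dx$ (the remainder $t_n^{-(p-1)}\mathrm df(u_n)[w_n]\to0$ since $\|w_n\|=1$), and combine this with the limit equation at $v=w$ to deduce $\int_\Omega\tilde a_n(\nabla w_n)\cdot\nabla(w_n-w)\,\mathrm dx\to0$. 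Since $\tilde a_n(\nabla w)\to|\nabla w|^{p-2}\nabla w$ strongly in $L^{p'}(\Omega)$ (dominated convergence, using $t_n\geq1$ eventually) while $\nabla(w_n-w)\rightharpoonup0$, this upgrades to $\int_\Omega\big(\tilde a_n(\nabla w_n)-\tilde a_n(\nabla w)\big)\cdot\nabla(w_n-w)\,\mathrm dx\to0$. The delicate point is that the monotone fields $\tilde a_n$ carry the vanishing parameter $\kappa^2/t_n^2$; nevertheless the standard ellipticity estimate $\big(\tilde a_n(\xi)-\tilde a_n(\eta)\big)\cdot(\xi-\eta)\gtrsim(\kappa^2/t_n^2+|\xi|^2+|\eta|^2)^{(p-2)/2}|\xi-\eta|^2\geq0$ holds with a constant uniform in $n$, so the non-negative integrands tend to $0$ in $L^1(\Omega)$ and hence a.e. along a subsequence; a generalized dominated convergence (Vitali) argument of Leray--Lions/Boccardo--Murat type then yields $\nabla w_n\to\nabla w$ a.e. and strongly in $L^p(\Omega)$. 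With $w\neq0$ in hand, the limit equation becomes $-\Delta_p w=\lambda|w|^{p-2}w$ with $w\in W^{1,p}_0(\Omega)\setminus\{0\}$, that is $\lambda\in\sigma(-\Delta_p)$, contradicting the hypothesis. Thus $(u_n)$ is bounded, and pre-compactness follows as above, completing the argument.
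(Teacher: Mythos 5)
Your argument is correct and follows essentially the same blow-down strategy as the proof the paper points to (the paper itself does not reprove this lemma but cites \cite[Theorem~1.1]{Cingolani:2018}): normalize an allegedly unbounded Cerami sequence, pass to the limit in the rescaled Euler equation, recover strong gradient convergence from the uniform monotonicity of the fields $\tilde a_n$, and contradict $\lambda\notin\sigma(-\Delta_p)$. The only cosmetic remark is that for $p>2$ the inequality $\big(\tilde a_n(\xi)-\tilde a_n(\eta)\big)\cdot(\xi-\eta)\gtrsim|\xi-\eta|^p$ (uniform in $n$) already gives $\nabla w_n\to\nabla w$ in $L^p$ directly, so the Vitali/Leray--Lions step can be skipped.
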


A  proof can be found in \cite[Theorem 1.1]{Cingolani:2018}. The case $\lambda \in \sigma (-\Delta_p)$ is more delicate and in fact additional conditions are needed to ensure the Cerami condition. To keep the exposition as elementary as possible we refrain to treat such a case here and refer to \cite[Theorem 1.2]{Cingolani:2018} instead. 

Finally, in case of superlinear growth at infinity, following \cite{Liu:2010} we assume that there exists $r>0$ such that, for almost every $x\in \Omega$, the function
\begin{equation}
 s\mapsto \frac{g(\cdot,s)}{|s|^{p-2}s} 
\label{eq:growthadditional2}
\end{equation}
is monotonically increasing\footnote{Notice that ($g-$sup) implies that $\frac{g(\cdot,s)}{|s|^{p-2}s}  \to +\infty$ as $|s|\to +\infty$.} for $s\geq r$ and monotonically decreasing for $s\leq -r$, and that 
\begin{equation}
G(\cdot,s) \geq - \alpha |s|^p, \quad \forall s \in \R
\label{eq:growthadditional3}
\end{equation}
for some $\alpha>0$. The proof of the next lemma is obtained from \cite[Lemma 2.5]{Liu:2010} with some minor modifications and will be omitted. 

\begin{lem}
Let $f:X\to \R$ be as in \eqref{eq:functionalf} and satisfy $(\Psi)$ and $(g\mathrm{-sup})$. Assume further that \eqref{eq:growthadditional2} and \eqref{eq:growthadditional3} hold. Then $f$ satisfies the Cerami condition. 
\end{lem}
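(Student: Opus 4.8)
The plan is to follow the two-step scheme announced at the start of the section: first prove that every Cerami sequence is bounded, and then upgrade weak convergence to strong convergence using the strict monotonicity of $\nabla\Psi$. So let $(u_n)\subset X$ be a Cerami sequence, meaning $f(u_n)\to c$ and $\|\mathrm df(u_n)\|(1+\|u_n\|)\to 0$; in particular $\mathrm df(u_n)[u_n]\to 0$. The whole difficulty is concentrated in the boundedness step, which I would carry out by contradiction, assuming $\|u_n\|\to+\infty$ and setting $w_n:=u_n/\|u_n\|$, so that $\|w_n\|=1$. After passing to a subsequence, $w_n\rightharpoonup w$ in $X$, $w_n\to w$ in $L^r(\Omega)$ for every $r<p^*$, and $w_n\to w$ a.e.

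In the case $w\neq 0$, I would note that $|u_n(x)|\to+\infty$ on the set $\{w\neq 0\}$, so the superlinear condition $(g\mathrm{-sup})$ forces $G(x,u_n)/|u_n|^p\to+\infty$ there. Using the lower bound \eqref{eq:growthadditional3} to pass to the limit by Fatou, together with the upper bound $\int_\Omega\Psi(\nabla u_n)\lesssim 1+\|u_n\|^p$ (which follows from $(\Psi)$ since convexity of $\mu_2\Psi_\kappa-\Psi$ gives $\Psi\le\mu_2\Psi_\kappa$), one obtains $f(u_n)/\|u_n\|^p\to-\infty$, contradicting $f(u_n)\to c$. The delicate case is $w=0$, and here I would invoke the monotonicity condition \eqref{eq:growthadditional2} through the maximization device of \cite{Liu:2010}. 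For each $n$ choose $t_n\in[0,1]$ with $f(t_nu_n)=\max_{t\in[0,1]}f(tu_n)$. Since $w_n\to 0$ in every $L^r$, $r<p^*$, the growth bound $(g)$ yields $\int_\Omega G(x,Mw_n)\to 0$ for each fixed $M$, while $\int_\Omega\Psi(M\nabla w_n)\gtrsim M^p-C$; hence $f(Mw_n)\to+\infty$ letting first $n\to+\infty$ and then $M\to+\infty$. As $f(t_nu_n)\ge f(Mw_n)$ whenever $M\le\|u_n\|$, this gives $f(t_nu_n)\to+\infty$, which (since $f(0)=0$ and $f(u_n)\to c$) forces the maximum to be interior for large $n$, so $\mathrm df(t_nu_n)[t_nu_n]=0$.

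The role of \eqref{eq:growthadditional2} is then the following: writing $\tilde G(x,s):=\tfrac1p g(x,s)s-G(x,s)$, the monotonicity of $s\mapsto g(x,s)/(|s|^{p-2}s)$ implies $\partial_s\tilde G(x,s)=\tfrac1p(\partial_sg(x,s)s-(p-1)g(x,s))\ge 0$ for $|s|\ge r$, so that $\tilde G(x,t_nu_n)\le\tilde G(x,u_n)$ up to the bounded region $\{|u_n|<r\}$. Combined with the analogous comparison $\int_\Omega[\Psi(t_n\nabla u_n)-\tfrac1p\nabla\Psi(t_n\nabla u_n)\cdot t_n\nabla u_n]\le\int_\Omega[\Psi(\nabla u_n)-\tfrac1p\nabla\Psi(\nabla u_n)\cdot\nabla u_n]+C$ for the $\Psi$-contribution (this is where the general $\Psi$ replaces the pure $p$-Laplacian of \cite{Liu:2010}, and where the ``minor modifications'' enter), one gets
\[
f(t_nu_n)=f(t_nu_n)-\tfrac1p\mathrm df(t_nu_n)[t_nu_n]\le f(u_n)-\tfrac1p\mathrm df(u_n)[u_n]+C\to c+C,
\]
contradicting $f(t_nu_n)\to+\infty$. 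Thus $\|u_n\|\to+\infty$ is impossible and $(u_n)$ is bounded.

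Finally, with $(u_n)$ bounded I would extract $u_n\rightharpoonup u$ and subtract the two relations $\mathrm df(u_n)[u_n-u]\to 0$ (from the Cerami hypothesis) and $\mathrm df(u)[u_n-u]\to 0$ (from weak convergence): the semilinear term $\int_\Omega[g(x,u_n)-g(x,u)](u_n-u)$ vanishes in the limit by the compact Sobolev embedding and $(g)$, while the strict convexity of $\Psi$ in $(\Psi)$ furnishes a monotonicity inequality of the type $(\nabla\Psi(a)-\nabla\Psi(b))\cdot(a-b)\gtrsim|a-b|^p$, which forces $\nabla u_n\to\nabla u$ in $L^p(\Omega)$ and hence $u_n\to u$ in $X$. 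I expect the main obstacle to be precisely the case $w=0$: everything there hinges on squeezing coercivity out of the monotonicity condition \eqref{eq:growthadditional2} alone, with no Ambrosetti--Rabinowitz-type inequality available, and on checking that the scaling comparison for the general operator $\nabla\Psi$ still produces a monotone $\Psi$-term, which is the content of the modification to \cite[Lemma 2.5]{Liu:2010}.
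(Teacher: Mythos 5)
Your argument is correct and is essentially the proof the paper has in mind: the paper omits it, referring to \cite{Liu:2010} (Lemma 2.5) ``with some minor modifications'', and your write-up is a faithful reconstruction of that scheme --- the blow-up dichotomy $w\neq 0$ versus $w=0$, the Jeanjean--Liu maximization point $t_n$ with $\mathrm d f(t_nu_n)[t_nu_n]=0$, the monotonicity of $\tilde G(x,s)=\tfrac1p g(x,s)s-G(x,s)$ extracted from \eqref{eq:growthadditional2}, and the $(S)_+$-type compactness step via the strict convexity in $(\Psi)$. The one point you correctly flag as still needing verification, namely that $t\mapsto\Psi(t\xi)-\tfrac1p\nabla\Psi(t\xi)\cdot t\xi$ is monotone up to an additive constant for a general $\Psi$ satisfying $(\Psi)$ rather than only for $\tfrac1p|\xi|^p$, is precisely the ``minor modification'' the paper alludes to.
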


\begin{rmk}
A standard assumption to ensure that $f$ satisfies the Palais-Smale condition in case of superlinear growth is the so-called \textit{Ambrosetti-Rabinowitz} condition, namely that there exist $\mu>p$ and $R>0$ such that 
$$0<\mu G(\cdot,s) \leq g(\cdot,s)s, \quad \forall |s|\geq r.$$ 
This is not assumed here. Hence, for certain non-linearities, $f$ might fail to satisfy the Palais-Smale condition but will satisfy Cerami instead.  
\end{rmk}


\bibliography{_biblio}
\bibliographystyle{plain}

\end{document}